\newcommand{\ud}{\,\mathrm{d}}
\newcommand{\reals}{\mathbb{R}}
\newcommand{\curlyB}{\mathscr{B}}
\newcommand{\gL}{\gamma(L^2(0,t),E_\eta)}
\newcommand{\bn}{\big\|}
\newcommand{\V}{V^p_{\alpha,\infty}\big([0,T]\times\Omega;E_\eta\big)}
\newcommand{\subV}{V^p_{\alpha,\infty}([0,T]\times\Omega;E_\eta)}
\numberwithin{equation}{section}
\theoremstyle{definition}
\newtheorem{defn}{Definition}[section]
\theoremstyle{plain}
\newtheorem{thm}[defn]{Theorem}
\theoremstyle{plain}
\newtheorem{prop}[defn]{Proposition}
\theoremstyle{plain}
\newtheorem{lemma}[defn]{Lemma}
\theoremstyle{plain}
\newtheorem{cor}[defn]{Corollary}
\theoremstyle{definition}
\newtheorem{expl}[defn]{Example}
\theoremstyle{definition}
\theoremstyle{remark}
\title{Infinitely delayed stochastic evolution equations in UMD Banach spaces}
\author{P. ~Crewe}
\begin{document}

\maketitle

\begin{abstract}
  We prove an existence and uniqueness result for the infinitely delayed stochastic evolution equation
  \begin{equation}
\left\{
\begin{array}{rll}
  \ud U(t) &= &\big(AU(t) + F(t,U_t)\big) \ud t + B(t,U_t)\ud W_H(t),\;\; t\in[0,T_0]\\
  U_0&: &\Omega \to \mathscr{B},
\end{array} \right.\nonumber
\end{equation}
where $A$ is the generator of an analytic semigroup on a UMD space $E$, $F$ and $B$ satisfy Lipschitz conditions and $\mathscr{B}$ is a weighted $L^p$ history space. This paper is based on recent work of van Neerven \emph{et al.}~which developed the theory of abstract stochastic evolution equations in UMD spaces.
\end{abstract}

%\begin{keyword}
%  Stochastic differential equations \sep Stochastic delay equations \sep Delay differential equations \sep Infinite delay \sep UMD Banach spaces \sep Stochastic convolutions \sep $\gamma$-radonifying operators \MSC 34K50, 60H30, 35R60, 46N30, 47D06.
%\end{keyword}

\section{Introduction}
In this paper we prove a mild existence and uniqueness result for a class of infinitely delayed abstract stochastic evolution equations in UMD Banach spaces. Delay equations have been long studied and have applications in a wide range of fields from biology to material science. Finite and infinite delay problems are different in flavour, the former being far more amenable to traditional semigroup methods of analysis, but both have a rich literature. Stochastic differential equations are much used in finance but due to recent advances are increasingly being applied far more widely.

We study the equation
\begin{equation}
\label{eqn:intro_DSDE}
\left\{
\begin{array}{rll}
  \ud U(t) &= &\big(AU(t) + F(t,U_t)\big) \ud t + B(t,U_t)\ud W_H(t),\;\; t\in[0,T_0]\\
  U_0&: &\Omega \to \mathscr{B},
\end{array} \right.
\end{equation}
where $U_t$ is the history function on $(-\infty,0]$ defined by $U_t(s) = U(t+s)$, $A$ is the generator of an analytic $C_0$-semigroup $S$ on a UMD space $E$ and $W_H$ is an $H$-cylindrical Brownian motion. The space $\curlyB$ of initial data is of a type introduced by Hale and Kato in \cite{halekato} and $F$ and $B$, mapping $[0,T_0]\times\Omega\times\curlyB$ to $E$ and $\mathcal{L}(H,E)$ respectively, are assumed to satisfy certain Lipschitz conditions.

This work is based on the recent results of van Neerven, Veraar and Weis \cite{VanN-Weis_1}, in which fixed point methods are used to prove existence and uniqueness for the non-delayed problem
\begin{equation}
\label{eqn:intro_SDE}
\left\{
\begin{array}{rll}
  \ud U(t) &= &\big(AU(t) + F(t,U(t))\big) \ud t + B(t,U(t))\ud W_H(t),\;\; t\in[0,T_0] \\
  U(0)&= &u_0
\end{array} \right.
\end{equation}
in a UMD space. An appropriate space $V$ of processes on $[0,T_0]$ is constructed in such a way that if $U$ is in $V$, then both the deterministic and stochastic convolutions
\[
 \int_0^t S(t-s)F(s,U(s))\ud s \quad\text{and}\quad \int_0^t S(t-s)B(s,U(s))\ud W_H(s)
\]
also lie in $V$. The difficulty arises (by a result from \cite{VanN-Veraar}) from the fact that, roughly speaking, if $f(u)$ is stochastically integrable for every $E$-valued stochastically integrable function $u$ and every Lipschitz function $f:E\to E$ then $E$ is isomorphic to a Hilbert space. Van Neerven \emph{et al.}\ overcome this problem by replacing the standard Lipschitz condition with a stronger notion, $L^2_\gamma$-Lipschitz, essentially a Gaussian version of the former, reducing the problem to one of finding a space $V$ such that if $\phi\in V$ then the pathwise convolutions
\begin{equation}
 \label{eqn:intro_convolutions}
 t\mapsto\int_0^t S(t-s)\phi(s)\ud s \quad\text{and}\quad t\mapsto\int_0^t S(t-s)\phi(s)\ud W_H(s)
\end{equation}
also define processes in $V$. Estimates for these integrals are then found through extensive use of the $\gamma$-boundedness of certain families of operators associated with the analytic $C_0$-semigroup $S$.

A theory of stochastic evolution equations of the type \eqref{eqn:intro_SDE} in Hilbert spaces has been in development for over twenty years, for example by Da Prato and Zabczyk \cite{DaPrato-Zabczyk} and much of this work was then extended to spaces of martingale type-$2$ \cite{Brzez_1,Brzez_2}. Our main reference \cite{VanN-Weis_1} of van Neerven \emph{et al.}~is an application to evolution equations of work by the same authors \cite{VanN-Weis_2}, in which they construct a new theory of stochastic integration in general UMD spaces, complete with a double sided It\^o inequality.

Stochastic delay equations have also been recently studied, for example by Cox and G\'orajski \cite{Sonja} who consider the finite delay problem in Banach spaces of type 2 by proving the equivalence of solutions to the stochastic delay equation and the associated stochastic Cauchy problem. Riedle \cite{Riedle} and van Neerven and Riedle \cite{VanN-Riedle} consider a delayed problem with additive noise in $\reals$ and $\reals^n$ with history data from a range of spaces which includes the space $\curlyB$ used in this article. Solutions are represented in the second dual of the history space. Existence of invariant measures for delayed stochastic systems is studied in \cite{vGaans} by van Gaans \emph{et al.}.

Infinitely delayed (deterministic) Cauchy problems have also been much studied. Unlike in problems with bounded (finite) delay, the system can never `forget' the initial data, and therefore the choice of history space $\curlyB$ is important for well-posedness. An axiomatic framework for the space of initial history functions is due to Hale and Kato \cite{halekato}, but most modern work on the subject uses the notation of Hino from \cite{hino}. There is work in the literature about infinitely delayed stochastic problems, for example \cite{liu_2, liu_1}, but all, as far as we can tell, is set in Hilbert spaces.

In this paper we first introduce briefly the tools used in the sequel, namely the stochastic integral in UMD spaces and notions related to $\gamma$ boundedness for families of operators. In Section \ref{sec:Results} we present out main result, the well posedness of \eqref{eqn:intro_DSDE} under suitable conditions and then in Section \ref{sec:example} we give an example based on a stochastic heat equation in a material with memory.

\section{Preliminaries}
\label{sec:prelims}

This section contains various definitions and results (without proof) that will be used below. Much of the following appears largely as it does in \cite{VanN-Weis_1}, and references to the particular results are given. 

Throughout, we will use $(\Omega,\mathscr{F},\mathbb{P})$ to denote a complete probability space with a filtration $\big\{\mathscr{F}_t\big\}_{t\geq0}$. For a finite measure space $(S,\Sigma,\mu)$ and a Banach space $E$, $L^0(S,E)$ denotes the space of equivalence classes of strongly measurable functions from $S$ to $E$. We will often work on sub-intervals $[0,T] \subseteq [0,T_0]$ and will need to keep track of the dependence of various constants upon $T$. We use $C$ for generic constants which may depend on $T_0$ but not on $T$. The value of $C$ may vary from line to line. 

All vector spaces are real valued.

\subsection{Deterministic delayed Cauchy problems}
\label{subsec:deterministicDCPs}

First we present an example of a history space $\curlyB$ of functions $(-\infty,0]\to E$ with the property that if the initial history of the deterministic delay problem
\begin{equation}
\label{eqn:deterministicDDE}
\left\{
\begin{array}{rll}
  u'(t)&= &Au(t) + F(t,u_t) \qquad t\in[0,T]\\
  u_0&= &\phi \in \mathscr{B}
\end{array} \right.
\end{equation}
is in $\curlyB$ and $F : [0,T]\times\curlyB\to E$ is uniformly Lipschitz then \eqref{eqn:deterministicDDE} is well posed. Here $A$ is assumed to be the generator of a $C_0$-semigroup, the history function $u_t:(-\infty,T]\to E$ is defined by $u_t(s) = u(t+s)$ and.

Abstract history spaces with this property were developed by Hale and Kato \cite{halekato}, however we do not require full generality and so give only a relevant example of Hino \cite{hino}.

\begin{expl}
\label{ex:weightedhistoryspace}
For a function $g:(-\infty,0]\to(0,\infty)$ such that
\begin{enumerate}
\item $g$ is Lebesgue integrable on $[-r,0]$ for any $r\leq 0$;
\item there exists a non-negative and locally bounded function $G$ on $(-\infty,0]$ such that $g(s + \theta)\leq G(s)g(\theta)$ for all $s\leq0$ and almost all $\theta\leq 0$,
\end{enumerate}
define $\mathscr{B} := L_g^p(-\infty,0;E)\times E$, where $L_g^p(-\infty,0;E) :=L^p(-\infty,0,g(t)\ud t; E)$ is the weighted Lebesgue space of all classes of functions $\phi:(-\infty,0]\to E$ with norm
\[\|\phi\|^p_{g} := \int_{-\infty}^{0}g(\theta)\|\phi(\theta)\|^p\ud \theta < \infty.\]
Then $\mathscr{B}$ with the norm $\|(\phi,x)\|_{\curlyB} := \|x\| + \|\phi\|_g$ is a Banach space for $p\in[1,\infty)$ and satisfies the axioms of Hino \cite{hino}.
\end{expl}

\begin{lemma}[$\curlyB$ satisfies the axioms of Hino]
\label{lemma:def:phasespace}
The space $\curlyB$ from Example \ref{ex:weightedhistoryspace} has the following properties.
 
Suppose $\phi:(-\infty, T]\to E$ is continuous on $[0,T]$ and $(\phi_0, \phi(0))\in\curlyB$, then $(\phi_t,\phi(t))\in \curlyB$ for each $t\in[0,T]$, and moreover, the map $t\mapsto (\phi_t,\phi(t))$ is continuous from $[0,T]$ to $(\curlyB,\|\cdot\|_{\curlyB})$.

Defining the functions $K$ and $M$ by 
\begin{align}
  K(t) &= 1+\Big(\int_{-t}^0g(u)\ud u\Big)^{\frac{1}{p}}\nonumber\\
  M(t) &= \max\Big\{\Big(\int_{-t}^0g(u)\ud u\Big)^{\frac{1}{p}}, G(-t)^{\frac{1}{p}}\Big\},\nonumber
\end{align}
 we have 
\begin{equation}
 \|\phi(t)\| \leq \big\|\big(\phi_t,\phi(t)\big)\big\|_\mathscr{B} \leq K(t)\sup_{s\in[0,t]}\|\phi(s)\| + M(t)\big\|\big(\phi_0,\phi(0)\big)\big\|_\mathscr{B}\label{eqn:KM_ineq}
\end{equation}
for each $t\in[0,T]$. Here $K$ is continuous and $M$ is locally bounded.
\end{lemma}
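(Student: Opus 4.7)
The proof splits naturally into three claims: the two-sided norm inequality (which in particular yields the membership $(\phi_t,\phi(t))\in\curlyB$), the regularity of $K$ and $M$, and the continuity of the map $t\mapsto(\phi_t,\phi(t))$.

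The lower bound in \eqref{eqn:KM_ineq} is immediate from the definition of the product norm. For the upper bound I would split
\[
\|\phi_t\|_g^p=\int_{-\infty}^0 g(\theta)\|\phi(t+\theta)\|^p\ud\theta
\]
at $\theta=-t$. On $[-t,0]$ one has $t+\theta\in[0,t]$, so bounding $\|\phi(t+\theta)\|$ by $\sup_{s\in[0,t]}\|\phi(s)\|$ and taking $p$-th roots yields a factor $(\int_{-t}^0 g)^{1/p}$. On $(-\infty,-t]$ the substitution $\theta'=t+\theta\leq 0$ combined with axiom (ii) (applied with $s=-t$) gives $g(\theta'-t)\leq G(-t)g(\theta')$ a.e., producing $G(-t)^{1/p}\|\phi_0\|_g$. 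Adding $\|\phi(t)\|\leq\sup_{s\in[0,t]}\|\phi(s)\|$ and bounding $\|\phi_0\|_g\leq\|(\phi_0,\phi(0))\|_\curlyB$ assembles the stated inequality, with $K(t)$ and $M(t)$ falling out of the constants; this also shows $(\phi_t,\phi(t))\in\curlyB$. Continuity of $K$ follows from absolute continuity of $t\mapsto\int_{-t}^0 g$ (axiom (i)), and local boundedness of $M$ from the assumed local boundedness of $G$.

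For the continuity of $t\mapsto(\phi_t,\phi(t))$, the second coordinate is continuous by hypothesis, so the task reduces to showing $t\mapsto\phi_t$ is continuous in $L_g^p(-\infty,0;E)$. I would decompose $\phi=\phi_-+\phi_+$ with $\phi_-=\phi\mathbf{1}_{(-\infty,0)}$ and $\phi_+=\phi\mathbf{1}_{[0,T]}$, so that $\phi_t=(\phi_+)_t+(\phi_-)_t$. For the $\phi_+$ piece, the integrand $g(\theta)\|\phi_+(t+\theta)-\phi_+(t_0+\theta)\|^p$ is dominated by $2^p\sup_{[0,T]}\|\phi\|^p\,g(\theta)\mathbf{1}_{[-T,0]}(\theta)$, which is integrable by axiom (i), and converges pointwise a.e.\ by continuity of $\phi$ on $[0,T]$, so dominated convergence applies. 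For the $\phi_-$ piece, I would use density: approximate $\phi_-$ in $L_g^p$ by $\psi\in C_c\big((-\infty,0);E\big)$, treat $(\psi)_t$ by a dominated-convergence argument on a compact $\theta$-region determined by the support of $\psi$, and control the tail via the shift estimate
\[
\|(\phi_--\psi)_t\|_g\leq G(-t)^{1/p}\|\phi_--\psi\|_g,
\]
which is itself the $(-\infty,-t]$ half of the computation already used for the main inequality.

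The principal obstacle is this last step, the translation-continuity in the weighted space $L_g^p$; growth axiom (ii) is precisely what allows one to pass to the limit in the density argument and what makes the history space $\curlyB$ suitable for the delay equation in the first place.
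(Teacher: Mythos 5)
Your argument is correct: splitting $\|\phi_t\|_g$ at $\theta=-t$ and applying axiom (ii) with $s=-t$ gives exactly \eqref{eqn:KM_ineq} (hence membership in $\curlyB$), and the decomposition $\phi=\phi_-+\phi_+$ with dominated convergence for $\phi_+$ and density plus the shift estimate $\|(\phi_--\psi)_t\|_g\leq G(-t)^{1/p}\|\phi_--\psi\|_g$ (with $G$ locally bounded) settles continuity of $t\mapsto(\phi_t,\phi(t))$. The paper states this lemma without proof, deferring to Hino's framework, and your verification is the standard one that fills that gap correctly.
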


We will adopt the convention that elements $(\phi,\phi(0))$ of $\curlyB$ are interpreted as functions $\phi\in L^p_g\big(-\infty,0;E\big)$ which take a value $\phi(0)$ at $0$, and unless otherwise stated we refer simply to $\phi \in \curlyB$. In all relevant cases, when given $\phi$ on $(-\infty,T]$ we will have $\phi$ continuous on $[0,T]$ and so the pair $(\phi_t,\phi(t))$ is well defined for $t\in[0,T]$.

For the generator $A$ of a $C_0$-semigroup $S$ on $E$, choose a number $w\in\reals$ such that $(A-w)$ generates a uniformly exponentially stable semigroup. The fractional powers $(A-w)^\eta$ are well defined and for $\eta>0$ the interpolation space 
\[
E_\eta := \mathcal{D}\big((A-w)^\eta\big).
\]
becomes a Banach space when endowed with the norm
\[
\|x\|_{E_\eta} := \|x\| + \big\|(A-w)^\eta x\big\|.
\]
Up to equivalence of norms this definition is independent of the choice of large enough $w\in\reals$. 

From now on we will assume that $A$ generates an analytic $C_0$-semigroup and will make frequent use of the standard estimate
\begin{equation}
\label{eqn:analyticproperty}
  \|S(t)\|_{\mathcal{L}(E,E_\eta)} \leq C_\eta t^{-\eta}
\end{equation}
for $t\in(0,T_0]$, $\eta>0$. For a history space $\curlyB$ as in Example \ref{ex:weightedhistoryspace} we define
\[
  \mathscr{B}_\eta := \{(A-w)^{-\eta}\phi : \phi \in \mathscr{B}\}
\]
to be the set of functions $\psi:(-\infty,0]\to E_\eta$ such that $(A-w)^{\eta}\psi\in\mathscr{B}$, with norm
\begin{equation}
\label{eqn:defbetanorm}
\|\psi\|_{\mathscr{B}_\eta} := \|(A-w)^\eta\psi\|_{\mathscr{B}}.
\end{equation}

\begin{lemma}
If $\curlyB = L^p_g\big((-\infty,0];E\big)\times E$ as in Example \ref{ex:weightedhistoryspace}, then the spaces $\curlyB_\eta$ and $L^p_g\big((-\infty,0];E_\eta\big)\times E_\eta$ are the same up to equivalent norms.
\end{lemma}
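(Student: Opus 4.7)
The plan is to identify both spaces with pairs $(\psi_0,\psi(0))$ and show that the two norms differ only by quantities controlled by the boundedness of $(A-w)^{-\eta}$ on $E$.

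First I would unpack the definition: by construction the map $(A-w)^{-\eta}$ acts componentwise on $\curlyB = L^p_g(-\infty,0;E)\times E$, sending $(\phi_0,\phi(0))$ to $\big((A-w)^{-\eta}\phi_0(\cdot),(A-w)^{-\eta}\phi(0)\big)$, with the first coordinate understood pointwise a.e. Because $(A-w)^{-\eta}\in\mathcal{L}(E)$ (a standard consequence of the exponential stability of the semigroup generated by $A-w$, for $w$ large), this map is well defined and linear, and its image is exactly the set of pairs $(\psi_0,\psi(0))$ such that $\psi(0)\in E_\eta$ and $\psi_0(\theta)\in E_\eta$ for a.e.\ $\theta$ with $(A-w)^\eta\psi_0(\cdot)\in L^p_g(-\infty,0;E)$. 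Strong measurability of $(A-w)^\eta\psi_0$ follows from boundedness of $(A-w)^{-\eta}$ together with strong measurability of $\phi_0=(A-w)^\eta\psi_0$.

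Next I would write out the two norms explicitly. By (\ref{eqn:defbetanorm}),
\[
\|\psi\|_{\curlyB_\eta}
= \|(A-w)^\eta\psi(0)\| + \Big(\int_{-\infty}^0 g(\theta)\|(A-w)^\eta\psi_0(\theta)\|^p\ud\theta\Big)^{1/p},
\]
while, using $\|x\|_{E_\eta}=\|x\|+\|(A-w)^\eta x\|$,
\[
\|\psi\|_{L^p_g(-\infty,0;E_\eta)\times E_\eta}
= \|\psi(0)\|+\|(A-w)^\eta\psi(0)\|+\Big(\int_{-\infty}^0 g(\theta)\big(\|\psi_0(\theta)\|+\|(A-w)^\eta\psi_0(\theta)\|\big)^p\ud\theta\Big)^{1/p}.
\]
The second expression dominates the first. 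For the reverse direction I would invoke boundedness of $(A-w)^{-\eta}\in\mathcal{L}(E)$: setting $C_0:=\|(A-w)^{-\eta}\|_{\mathcal{L}(E)}$, every $x\in E_\eta$ satisfies $\|x\|\leq C_0\|(A-w)^\eta x\|$, so pointwise in $\theta$,
\[
\|\psi_0(\theta)\|+\|(A-w)^\eta\psi_0(\theta)\| \leq (1+C_0)\|(A-w)^\eta\psi_0(\theta)\|,
\]
and likewise $\|\psi(0)\|_{E_\eta}\leq(1+C_0)\|(A-w)^\eta\psi(0)\|$. Integrating the pointwise bound against $g\ud\theta$ and taking $p$-th roots yields
\[
\|\psi\|_{L^p_g(-\infty,0;E_\eta)\times E_\eta} \leq (1+C_0)\|\psi\|_{\curlyB_\eta},
\]
which, combined with the trivial inequality in the other direction, gives equivalence of the two norms.

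There is no real obstacle here; the argument is essentially bookkeeping. The only subtle points I would be careful about are (a) checking strong measurability of $(A-w)^\eta\psi_0$ so that the weighted $L^p$ integral on the $E_\eta$ side genuinely makes sense, and (b) making sure that the constants do not depend on $T$ or any other moving quantity, which they do not, since they depend only on $w$ and $\eta$.
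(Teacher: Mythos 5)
Your argument is correct: the two-sided comparison via boundedness of $(A-w)^{-\eta}$ on $E$ (so that $\|x\|\leq C_0\|(A-w)^\eta x\|$ for $x\in E_\eta$), applied pointwise in $\theta$ and then integrated against $g$, is exactly the routine verification this lemma calls for, and your measurability remark correctly handles the set identification since $(A-w)^{\pm\eta}$ are bounded between $E_\eta$ and $E$. The paper states this lemma in its preliminaries without proof, so there is nothing to compare beyond noting that your bookkeeping is the intended argument.
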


\subsection{Stochastic integration and $\gamma$-boundedness}

In this section we recall the tools we will need to estimate the convolutions in \eqref{eqn:intro_convolutions}, namely the notions of $\gamma$-radonifying for individual operators and $\gamma$-boundedness for families of operators. We also briefly construct the stochastic integral of van Neerven \emph{et al.}. For more details we recommend the paper \cite{VanN-Weis_2}.

\begin{defn}
\label{def:gamma-radonifying}
A bounded linear operator $R\in \mathcal{L}(H, E)$ from a separable Hilbert space $H$ into a Banach space $E$ is said to be \emph{$\gamma$-Radonifying} if for some (and hence every) orthonormal basis $(h_n)_{n\in\mathbb{N}}$ of $H$, the Gaussian sum
\[
\sum_{n\in\mathbb{N}}\gamma_nRh_n
\]
converges in $L^2(\Omega;E)$. Here $(\gamma_n)_{n\in\mathbb{N}}$ is a \emph{Gaussian sequence}, a sequence of independent standard real Gaussian random variables. The space $\gamma(H,E)$ of all $\gamma$-Radonifying operators from $H$ into $E$ becomes a Banach space with respect to the norm
\begin{equation}
\label{eqn:gamma-norm}
  \|R\|_{\gamma(H,E)} := \Bigg(\mathbb{E}\Big\|\sum_{n\in\mathbb{N}}\gamma_nRh_n\Big\|^2\Bigg)^{\frac{1}{2}}.
\end{equation}
This norm is independent of the choice of orthonormal basis for $H$.
\end{defn}

The next result is known as the $\gamma$-Fubini isomorphism, a tool we will use repeatedly in estimating $\gamma$-norms.

\begin{prop}[Proposition 2.6 of \cite{VanN-Weis_2}]
\label{prop:2.6}
Let $(S,\Sigma,\mu)$ be a $\sigma$-finite measure space and let $p\in [1,\infty)$, then the map $F_\gamma: L^p\big(S;\gamma(H,E)\big) \to \mathcal{L}\big(H,L^p(S;E)\big)$ defined by
\[
\big(F_\gamma(\phi)h\big)(s) := \phi(s)h, \quad s\in S, h\in H
\]
defines an isomorphism from $L^p\big(S;\gamma(H,E)\big)$ \emph{onto} $\gamma\big(H,L^p(S;E)\big)$.
\end{prop}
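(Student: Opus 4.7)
The plan is to establish the isomorphism in three stages: first verify a two-sided norm equivalence on the dense subspace of $\gamma(H,E)$-valued simple functions, then extend $F_\gamma$ by continuity to a bounded embedding, and finally obtain surjectivity via density of finite-rank operators in $\gamma\big(H,L^p(S;E)\big)$.

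First I would take a simple function $\phi=\sum_{k=1}^N\chi_{A_k}R_k$, with disjoint $A_k\in\Sigma$ of finite measure and $R_k\in\gamma(H,E)$, for which $\|\phi\|_{L^p(S;\gamma(H,E))}^p=\sum_k\mu(A_k)\|R_k\|_{\gamma(H,E)}^p$ is immediate. Fixing an orthonormal basis $(h_n)$ of $H$ and a Gaussian sequence $(\gamma_n)$, the other norm is
\[
\|F_\gamma(\phi)\|_{\gamma(H,L^p(S;E))}^2=\mathbb{E}\Big\|\sum_{n}\gamma_n\phi(\cdot)h_n\Big\|_{L^p(S;E)}^2.
\]

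The heart of the computation is a double application of the Kahane--Khintchine inequality for Gaussian sums in a Banach space. First use it globally on $L^p(S;E)$ to pass from the second moment to the $p$-th moment of the above series, up to a constant depending only on $p$; Fubini then exchanges $\mathbb{E}$ with the spatial integral, giving $\int_S\mathbb{E}\|\sum_n\gamma_n\phi(s)h_n\|^p\,\ud\mu(s)$; Kahane--Khintchine applied pointwise in $s$ converts the integrand to $\|\phi(s)\|_{\gamma(H,E)}^p$. Assembling the chain yields
\[
\|F_\gamma(\phi)\|_{\gamma(H,L^p(S;E))}\simeq_p\|\phi\|_{L^p(S;\gamma(H,E))},
\]
with constants depending only on $p$.

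Since simple $\gamma(H,E)$-valued functions are dense in $L^p\big(S;\gamma(H,E)\big)$, this estimate extends $F_\gamma$ uniquely to a bounded linear map with bounded inverse onto a closed subspace of $\gamma\big(H,L^p(S;E)\big)$. For surjectivity I would invoke the density of finite-rank operators in $\gamma\big(H,L^p(S;E)\big)$: any such operator has the form $h\mapsto\sum_{k=1}^N(h,h_k)_Hf_k$ for some $f_k\in L^p(S;E)$, which is precisely $F_\gamma(\phi)$ for the finite-rank-valued $\phi(s)\colon h\mapsto\sum_k(h,h_k)_Hf_k(s)$, whose membership in $L^p\big(S;\gamma(H,E)\big)$ follows from one more application of Kahane--Khintchine and Fubini. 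Hence the image of $F_\gamma$ is both dense and closed in $\gamma\big(H,L^p(S;E)\big)$, so it coincides with the whole space.

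The main obstacle is the double Kahane--Khintchine step: one must interpret the Gaussian series $\sum_n\gamma_n\phi(\cdot)h_n$ correctly as a convergent series in $L^2\big(\Omega;L^p(S;E)\big)$, which is not automatic from its $s$-pointwise convergence in $L^2(\Omega;E)$, and then justify the Fubini swap of expectation and spatial integration. A secondary technicality is verifying that the preimages under $F_\gamma$ of the finite-rank approximants of an arbitrary $T\in\gamma\big(H,L^p(S;E)\big)$ genuinely lie in $L^p\big(S;\gamma(H,E)\big)$, but this is routine once the norm equivalence is in hand.
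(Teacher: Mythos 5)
Your proposal is correct: the paper itself states this result without proof, citing Proposition 2.6 of van Neerven--Veraar--Weis, and your argument reproduces the standard proof given there --- the two-sided Kahane--Khintchine/Fubini estimate on simple functions, extension by density, and surjectivity via density of the finite-rank operators in $\gamma\big(H,L^p(S;E)\big)$. The technical points you flag (convergence of the Gaussian series in $L^2\big(\Omega;L^p(S;E)\big)$ for simple $\phi$, and membership of the finite-rank preimages in $L^p\big(S;\gamma(H,E)\big)$) are indeed routine and resolve exactly as you indicate.
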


In the delayed context, we will also need the following immediate corollary.

\begin{cor}
\label{cor:product_gamma_fub}
 If $(S,\Sigma,\mu)$ is a $\sigma$-finite measure space and $p\in [1,\infty)$, then the map $G_\gamma: L^p\big(S;\gamma(H,E)\big)\times \gamma(H,E) \to \mathcal{L}\big(H,L^p(S;E)\times E\big)$ defined by
\[
 G_\gamma\big(\phi,R\big)h := \big(\phi(\cdot)h,Rh\big)
\]
is an isomorphism from $L^p\big(S;\gamma(H,E)\big)\times\gamma(H,E)$ \emph{onto} $\gamma\big(H,L^p(S;E)\times E\big)$.
\end{cor}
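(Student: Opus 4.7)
The plan is to factor the claimed isomorphism through Proposition~\ref{prop:2.6} by first reducing $\gamma$-radonifying operators into a product Banach space to pairs of $\gamma$-radonifying operators into the individual factors.

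The first step is to establish the auxiliary fact that for any Banach spaces $X$ and $Y$ the map
\[
 \Psi:\gamma(H,X\times Y)\to \gamma(H,X)\times\gamma(H,Y), \qquad T\mapsto(\pi_X\circ T,\,\pi_Y\circ T),
\]
with $\pi_X,\pi_Y$ the coordinate projections, is an isomorphism of Banach spaces. Boundedness of $\Psi$ is immediate from the right-ideal property of $\gamma$-radonifying operators together with boundedness of the projections. Its inverse sends $(T_1,T_2)$ to $h\mapsto(T_1h,T_2h)$, and boundedness of the inverse follows from expanding
\[
 \Bigg(\mathbb{E}\Big\|\sum_n\gamma_n(T_1h_n,T_2h_n)\Big\|^2\Bigg)^{1/2}
\]
on $X\times Y$ in its equivalent norm $\|(x,y)\|=\|x\|+\|y\|$ and applying the triangle inequality in $L^2(\Omega;\reals)$.

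Next I would apply this with $X=L^p(S;E)$ and $Y=E$ and combine with Proposition~\ref{prop:2.6}, which identifies $\gamma\big(H,L^p(S;E)\big)$ with $L^p\big(S;\gamma(H,E)\big)$ via $F_\gamma$. Chaining the two identifications gives a Banach space isomorphism
\[
 L^p\big(S;\gamma(H,E)\big)\times\gamma(H,E)\;\simeq\;\gamma\big(H,L^p(S;E)\big)\times\gamma(H,E)\;\simeq\;\gamma\big(H,L^p(S;E)\times E\big).
\]
Chasing the definitions shows that the composite maps $(\phi,R)$ to the operator $h\mapsto(F_\gamma(\phi)h,Rh)=\big(\phi(\cdot)h,Rh\big)$, which is precisely $G_\gamma(\phi,R)$.

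I expect no substantive obstacle. The only non-routine ingredient is the product-of-factors isomorphism $\Psi$, and even that reduces to a short norm computation using that the two natural norms on $X\times Y$ are equivalent; the real content of the corollary lies in Proposition~\ref{prop:2.6}.
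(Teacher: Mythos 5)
Your proposal is correct and is exactly the argument the paper has in mind: the paper states this as an immediate consequence of Proposition~\ref{prop:2.6} and gives no separate proof, the implicit step being precisely your observation that $\gamma(H,X\times Y)\simeq\gamma(H,X)\times\gamma(H,Y)$ (via the ideal property in one direction and the triangle inequality in $L^2(\Omega)$ for the Gaussian sums in the other), applied with $X=L^p(S;E)$, $Y=E$. Your norm computation and the identification of the composite with $G_\gamma$ are accurate, so there is nothing to add.
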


In order to construct the stochastic integral that will be used throughout this paper, we start by giving the definition of operator valued Brownian motion. For a Hilbert space $\mathscr{H}$, an \emph{$\mathscr{H}$-isonormal process} on a probability space $(\Omega,\mathscr{F},\mathbb{P})$ is a mapping $\mathscr{H}\to L^2(\Omega)$ such that
\begin{itemize}
  \item[(i)] $W(h)$ is a centred Gaussian random variable for each $h\in\mathscr{H}$;
  \item[(ii)] $\mathbb{E}\big(W(h)W(g)\big) = \langle h,g\rangle_{\mathscr{H}}$ for each $h,g\in\mathscr{H}$.
\end{itemize}

Now consider the case $\mathscr{H} := L^2(\reals_+;H)$. An $L^2(\reals_+;H)$-isonormal process induces a family of operators $H\to L^2(\Omega)$ by
\[
  W_H(t)h := W\big(\mathbbm{1}_{(0,t]}\otimes h\big). 
\]
We call $W_H := \big(W_H(t)\big)_{t\in[0,T]}$ an \emph{$H$-cylindrical Brownian motion}. $W_H(t)$ is linear for each $t\in[0,T]$ by (ii) and each $\big(W_H(t)h\big)_{t\in[0,T]}$ is a real-valued Brownian motion.

The natural filtration on $\Omega$ generated by $W_H$ will be denoted $\big\{\mathscr{F}_t\big\}_{t\in[0,T]}$. For the indicator process $\mathbbm{1}_{(a,b]\times B}\otimes(h\otimes x):(0,T)\times\Omega\to\mathcal{L}(H,E)$ defined for $0\leq a < b < T$ and an $\mathscr{F}_a$-measurable subset $B$ of $\Omega$ by 
\[
 \mathbbm{1}_{(a,b]\times B}\otimes(h\otimes x)(t,\omega)g := \mathbbm{1}_{(a,b]\times B}(t,\omega)\langle h,g \rangle_H x
\]
we define the \emph{stochastic integral with respect to $W_H$} as
\[
 \int_0^T \mathbbm{1}_{(a,b]\times B}\otimes(h\otimes x) \ud W_H := \mathbbm{1}_B\big(W_H(b)h - W_H(a)h\big)x.
\]
We extend this definition to adapted step processes $\Phi:(0,T)\times\Omega\to\mathcal{L}(H,E)$ with values in the finite rank operators in the usual way by linearity.

We say a process $\Phi:(0,T)\times\Omega\to \mathcal{L}(H,E)$ is \emph{$H$-strongly measurable} if $\Phi h$ is strongly measurable for each $h\in H$ (where $(\Phi h)(t,\omega) = \Phi(t,\omega)h$).

\begin{defn}
\label{def:stoch_integrable}
 An $H$-strongly measurable process $\Phi$ is \emph{stochastically integrable} with respect to an $H$-cylindrical Brownian motion $W_H$ if there exists a sequence $(\Phi_n)$ of adapted step processes $\Phi_n:[0,T]\times\Omega\to\mathcal{L}(H,E)$ taking values in the finite rank operators and a pathwise continuous process $\xi:[0,T]\times\Omega\to E$ such that 
\begin{itemize}
\item[(i)]$\lim_{n\to\infty}\Phi_nh = \Phi h$ in $L^0\big((0,T)\times\Omega;E\big)$ for all $h\in H$;
\item[(ii)]$\lim_{n\to\infty}\int_0^\cdot\Phi_n \ud W_H = \xi$ in $L^0\big(\Omega;C([0,T];E)\big)$. 
\end{itemize}
Then $\xi$ is a uniquely determined element of $L^0\big(\Omega;C([0,T];E)\big)$, justifying the notation
\[
\xi := \int_0^\cdot \Phi \ud W_H.
\]
The process $\xi$ is then a continuous local Martingale starting at zero and is called the \emph{stochastic integral} of $\Phi$ with respect to $W_H$.
\end{defn}

A Banach space $E$ is said to be a \emph{UMD space} if for some (and hence all) $p\in(1,\infty)$ there exists a constant $\beta_{p,E} \geq 1$ such that for $n\geq1$ every martingale sequence $(d_j)^n_{j=1}$ in $L^p(\Omega;E)$ and every $\{-1,1\}$-valued sequence $(\varepsilon_j)^n_{j=1}$ we have 
\[
  \Bigg(\mathbb{E}\Big\| \sum_{j=1}^n \varepsilon_jd_j \Big\|^p \Bigg)^{\frac{1}{p}} \leq \beta_{p,E}\Bigg(\mathbb{E}\Big\| \sum_{j=1}^n d_j \Big\|^p \Bigg)^{\frac{1}{p}}.
\]

The following lemma gives necessary and sufficient conditions for a process $\Phi$ to be stochastically integrable by characterising integrability in terms of representation of $\gamma$-radonifying operators.

\begin{lemma}[Lemma 2.4 of \cite{VanN-Weis_1}]
 \label{lemma:ness&suff_stock_integrable}
Let $E$ be a UMD space. For an $H$-strongly measurable process $\Phi:(0,T)\times\Omega\to\mathcal{L}(H,E)$, the following are equivalent:
\begin{itemize}
 \item[(i)] the process $\Phi$ is stochastically integrable with respect to $W_H$;
 \item[(ii)] for all $x^*\in E^*$ the process $\Phi^*x^*$ belongs to $L^0\big(\Omega;L^2(0,T;H)\big)$  and there exists an operator valued random variable $R:\Omega \to \gamma\big(L^2(0,T;H),E\big)$ such that for all $f\in L^2(0,T;H)$ and $x^*\in E^*$ we have 
\[
 x^*(Rf) = \int_0^T \langle f(t), \Phi^*(t)x^* \rangle_H \ud t \quad\text{in } L^0(\Omega).
\]
\end{itemize}
\end{lemma}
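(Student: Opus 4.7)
The plan is to derive the equivalence from the two-sided It\^o isomorphism in UMD spaces. For an adapted step process $\Phi$ with values in the finite-rank operators, define pathwise the operator $R_\Phi(\omega):L^2(0,T;H)\to E$ by $R_\Phi(\omega)f := \int_0^T \Phi(t,\omega)f(t)\,\ud t$. Such an $R_\Phi$ is of finite rank, hence automatically an element of $\gamma(L^2(0,T;H),E)$, and a direct computation yields the representation identity
\[
x^*(R_\Phi f) = \int_0^T \langle f(t),\Phi^*(t)x^*\rangle_H\,\ud t
\]
for all $x^*\in E^*$ and $f\in L^2(0,T;H)$. The central input is the It\^o isomorphism of van Neerven--Weis, which under the UMD assumption gives, for each $p\in(1,\infty)$ and every such step $\Phi$,
\[
\mathbb{E}\Big\|\int_0^T \Phi\,\ud W_H\Big\|^p \simeq_{p,E}\; \mathbb{E}\|R_\Phi\|^p_{\gamma(L^2(0,T;H),E)}.
\]

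For the implication (i)$\Rightarrow$(ii), I would take the approximating step processes $(\Phi_n)$ from Definition \ref{def:stoch_integrable}. Since $\int_0^\cdot \Phi_n\,\ud W_H$ converges in $L^0(\Omega;C([0,T];E))$, a stopping-time localization (reducing to uniformly bounded stochastic integrals on stopping sets of probability close to one) allows us to convert the $L^p$ It\^o isomorphism into a statement in probability, showing that $(R_{\Phi_n})$ is Cauchy in $L^0(\Omega;\gamma(L^2(0,T;H),E))$ with some limit $R$. Passing to the limit in the step-level identity, combined with $H$-strong measurability of $\Phi$ and condition (i) of Definition \ref{def:stoch_integrable}, yields the integral representation for $R$; in particular $\Phi^*x^*\in L^0(\Omega;L^2(0,T;H))$.

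For (ii)$\Rightarrow$(i), I would approximate $R$ in the $\gamma$-norm by finite-rank operators with adapted structure, lifting them back to adapted step processes $\Phi_n$ whose associated operators $R_{\Phi_n}$ converge to $R$ in $L^0(\Omega;\gamma(L^2(0,T;H),E))$. The It\^o isomorphism, applied in the reverse direction through the same localization, then shows that $(\int_0^\cdot\Phi_n\,\ud W_H)$ is Cauchy in $L^0(\Omega;C([0,T];E))$, so Definition \ref{def:stoch_integrable} produces the stochastic integral of $\Phi$ with representing operator $R$.

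The main obstacle is twofold. First, one has to produce \emph{adapted} step-process approximations from the abstract random operator $R$ while preserving the $\gamma$-convergence; this is more delicate than the unadapted analogue and relies on a discretization of the time variable combined with conditional expectations. Second, the It\^o isomorphism is naturally an $L^p$ statement, so to obtain the $L^0$ characterization required here one must argue via stopping times and Lenglart-type domination; this is where the UMD assumption enters essentially, since it is precisely UMD that provides the \emph{lower} bound in the It\^o isomorphism needed for direction (i)$\Rightarrow$(ii).
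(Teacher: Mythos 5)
The paper does not prove this lemma: it is stated in the Preliminaries, without proof, as a quotation of Lemma~2.4 of van Neerven--Veraar--Weis \cite{VanN-Weis_1}, so there is no in-paper argument to compare against. Your sketch follows the route of that cited source --- the two-sided It\^o isomorphism for adapted finite-rank step processes, localization by stopping times to pass from $L^p$ to $L^0$, and approximation of the representing operator by \emph{adapted} step processes --- and it correctly isolates the two genuinely hard steps (the adapted approximation and the Lenglart-type localization), which is where the UMD hypothesis and the real technical work of the reference lie; as an outline it is sound and essentially the same approach, though of course those two steps are asserted rather than carried out.
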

In this case we say that $R$ is represented by $\Phi$. We will often identify $\Phi$ with $R$ without further comment, for example when referring to the norm $\|\Phi(\cdot,\omega)\|_{\gamma(L^2(0,T;H),E)}$.

\begin{defn}
\label{def:R-bdd}
 Let $E$ and $F$ be Banach spaces and suppose $(r_n)_{n\geq1}$ is a \emph{Rademacher sequence}, that is, a sequence of independent random variables $r_n$  with $\mathbb{P}\{r_n = -1\} = \mathbb{P}\{r_n = 1\} = 1/2$. A family $\mathscr{T}\subset \mathcal{L}(E,F)$ is said to be \emph{$R$-bounded} if there exists a constant $C\geq 0$ such that for all finite sequences $(x_n)_{n=1}^N$ in $E$ and $(T_n)_{n=1}^N$ in $\mathscr{T}$ we have 
\begin{equation}
\label{eqn:def_R-bdd}
 \mathbb{E}\Big\|\sum_{n=1}^N r_n T_n x_n \Big\|^2 \leq C^2 \mathbb{E}\Big\| \sum_{n=1}^N r_n x_n \Big\|^2.
\end{equation}
The least such constant $C$ is called the \emph{$R$-bound} of $\mathscr{T}$ and denoted $R(\mathcal{T})$.

If a family $\mathscr{T}$ satisfies the inequality \eqref{eqn:def_R-bdd} with the Rademacher sequence replaced by a sequence $(\gamma_n)_{n\geq1}$ of standard Gaussian random variables then we say $\mathscr{T}$ is \emph{$\gamma$-bounded}, and denote the $\gamma$-bound by $\gamma(\mathscr{T})$ in the same way. By a standard randomisation argument every $R$-bounded family is also $\gamma$-bounded. The converse holds if the range space if of finite cotype. See \cite{VanN-Weis_1} Section 3.

\end{defn}

The following Lemma is essentially Lemma 2.9 of \cite{VanN-Weis_1}. The restriction to spaces which do not contain $c_0$ does not appear in the reference, but following discussion with the author it is believed to be necessary. It should be noted that no UMD space contains a copy of $c_0$.

\begin{lemma}
\label{lemma:2.9}
Let $E,F$ be Banach spaces which do not contain closed subspaces isomorphic to $c_0$, $H$ be a separable Hilbert space and $T>0$. Let $M:(0,T)\to\mathcal{L}(E,F)$ be a function satisfying
\begin{itemize}
\item[(i)] for all $x\in E$ the function $M(\cdot)x$ is strongly measurable in $F$;
\item[(ii)] the range $\mathscr{M} := \{M(t): t\in(0,T)\}$ is $\gamma$-bounded in $\mathcal{L}(E,F)$.
\end{itemize}
Then for all step functions $\Psi:(0,T)\to\mathcal{L}(H,E)$ taking values in the finite rank operators from $H$ to $E$ we have
\begin{equation}
\label{eqn:lemma2.9}
  \|M\Psi\|_{\gamma(L^2(0,T;H),F)} \leq \gamma(\mathscr{M})\|\Psi\|_{\gamma(L^2(0,T;H),E)}.
\end{equation}
Here $(M\Psi)(t):= M(t)\Psi(t)$. The mapping $\Psi\mapsto M\Psi$ then has a unique extension to a bounded operator from $\gamma\big(L^2(0,T;H),E\big)$ to $\gamma\big(L^2(0,T;H),F\big)$ of norm at most $\gamma(\mathscr{M})$.
\end{lemma}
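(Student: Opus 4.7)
The plan is a two-step argument: first verify the inequality when $M$ is itself a simple function with values in $\mathscr{M}$, and then approximate a general $M$ by such simple functions, invoking the exclusion of $c_0$ to pass to the limit in the $\gamma$-norm.

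Step 1 (simple case). Suppose $M = \sum_\ell \mathbbm{1}_{B_\ell} M_\ell$ with $(B_\ell)$ a finite measurable partition of $(0,T)$ and each $M_\ell \in \mathscr{M}$. Write $\Psi = \sum_j \mathbbm{1}_{A_j} \Psi_j$ with each finite-rank $\Psi_j$ given by an SVD-like representation $\Psi_j h = \sum_k \langle h, g_{jk}\rangle_H e_{jk}$ in which $(g_{jk})_k$ is orthonormal in $H$. Setting $C_{\ell j} := B_\ell \cap A_j$, the family $\phi_{\ell j k}(t) := \mu(C_{\ell j})^{-1/2} \mathbbm{1}_{C_{\ell j}}(t) g_{jk}$ is orthonormal in $L^2(0,T;H)$, and both $\Psi$ and $M\Psi$ annihilate its orthogonal complement. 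A direct computation gives $\Psi \phi_{\ell j k} = \mu(C_{\ell j})^{1/2} e_{jk}$ and $(M\Psi) \phi_{\ell j k} = \mu(C_{\ell j})^{1/2} M_\ell e_{jk}$, whence
\[
\|M\Psi\|_{\gamma(L^2(0,T;H),F)}^2 = \mathbb{E} \Big\| \sum_{\ell,j,k} \gamma_{\ell j k} \mu(C_{\ell j})^{1/2} M_\ell e_{jk} \Big\|_F^2.
\]
Applying the $\gamma$-boundedness of $\mathscr{M}$ with operators $T_{\ell j k} := M_\ell$ and vectors $x_{\ell j k} := \mu(C_{\ell j})^{1/2} e_{jk}$ bounds the right-hand side by $\gamma(\mathscr{M})^2 \|\Psi\|_\gamma^2$.

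Step 2 (approximation). For general $M$, the range of $\Psi$ is contained in the finite-dimensional subspace $V := \mathrm{span} \bigcup_j \Psi_j(H) \subset E$. Hypothesis (i) makes $M(\cdot)|_V$ a strongly measurable $\mathcal{L}(V, F)$-valued map, so a Pettis-type approximation produces simple functions $M^{(n)}(t) := M(s_n(t))$, with $s_n : (0,T) \to (0,T)$ simple, such that $M^{(n)}(t) v \to M(t) v$ for every $v \in V$ and almost every $t$. Since each $M^{(n)}(t)$ lies in $\mathscr{M}$, Step 1 yields
\[
\|M^{(n)} \Psi\|_\gamma \leq \gamma(\mathscr{M}) \|\Psi\|_\gamma
\]
uniformly in $n$, while $M^{(n)} \Psi \to M\Psi$ pointwise in the strong operator topology.

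The main obstacle is this limit passage, and it is exactly here that the exclusion of $c_0$ is essential. By the Hoffmann-J{\o}rgensen--Kwapie\'n theorem, a Banach space contains no closed subspace isomorphic to $c_0$ precisely when every Gaussian series with $L^2$-bounded partial sums converges almost surely; this yields a Fatou-type property for the $\gamma$-norm, namely that if $(T_n) \subset \gamma(L^2(0,T;H), F)$ is bounded in $\gamma$-norm and converges pointwise to an operator $T$, then $T$ is itself $\gamma$-radonifying with $\|T\|_\gamma \leq \liminf_n \|T_n\|_\gamma$. Applied to $T_n := M^{(n)} \Psi$, this completes the proof of the inequality, and the concluding statement of the lemma then follows from the density of finite-rank simple functions in $\gamma(L^2(0,T;H), E)$.
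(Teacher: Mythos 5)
Your argument is correct. Note that the paper itself does not prove this lemma: it is quoted (with the added $c_0$ hypothesis) from Lemma 2.9 of van Neerven--Veraar--Weis, so the only available comparison is with the standard Kalton--Weis multiplier argument -- and your proof is essentially that argument. Step 1 is the usual direct computation: the orthonormal family $\phi_{\ell jk}$ is indeed annihilated-complemented by both $\Psi$ and $M\Psi$ (discard the indices with $\mu(C_{\ell j})=0$), so the $\gamma$-norms reduce to finite Gaussian sums and the $\gamma$-boundedness inequality applies verbatim. Step 2 is also sound: strong measurability of $M(\cdot)v$ for $v$ in the finite-dimensional space $V$ gives essential separability of $M(\cdot)|_V$ in $\mathcal{L}(V,F)$, which justifies the approximation by $M\circ s_n$ with values in $\mathscr{M}$; the pointwise convergence $M^{(n)}\Psi f\to M\Psi f$ uses dominated convergence together with the uniform bound $\sup_{t}\|M(t)\|\le\gamma(\mathscr{M})$, which you should state explicitly since it is what makes the dominating function integrable. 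Your use of the Hoffmann-J{\o}rgensen--Kwapie\'n theorem is exactly the right mechanism: the Fatou-type estimate holds in the $\gamma$-summing norm in general, and the exclusion of $c_0$ from $F$ upgrades the limit from $\gamma$-summing to $\gamma$-radonifying; this makes transparent the point flagged in the remark preceding the lemma (observe that only the hypothesis on $F$, not on $E$, is actually used). The closing density argument for the extension is standard and correct.
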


For the deterministic and stochastic convolutions of the semigroup $S$ with a function $\phi: [0,T]\to E$ we write $S*\phi$ and $S\diamond\phi$ respectively, defined by
\[
 \big(S*\phi\big)(t) := \int_0^t S(t-s)\phi(s) \ud s, \quad \big(S\diamond\phi\big)(t) := \int_0^t S(t-s)\phi(s) \ud W_H(s).
\]
We will refer to several convolution estimates from \cite{VanN-Weis_1}, sections 3 and 4 in the course of our results.

A Banach space $E$ is said to have \emph{type} $p\in[1,2]$ if there exists a constant $C\geq0$ such that for all finite sequences $(x_n)_{n=1}^N$ in $E$ we have
\[
 \Bigg( \mathbb{E}\Bigg\|\sum_{n=1}^N r_n x_n \Bigg\|^2\Bigg)^{\frac{1}{2}} \leq C \Bigg(\sum_{n=1}^N \|x_n\|^p\Bigg)^{\frac{1}{p}}
\]
where $(r_n)_{n\geq1}$ is a Rademacher sequence as in Definition \ref{def:R-bdd}. Every Banach space has type $1$, the spaces $L^p(S)$ have type $\min\{p,2\}$ and every UMD space has type $p>1$. Note in the results below, that the valid ranges of some constants may depend on the type of $E$.

Finally for this section, we recall the notion of $L^2_\gamma$-Lipschitz functions. This assumption is stronger than the standard Lipschitz property and affords a way out of the difficulties alluded to in the introduction.

\begin{defn}
\label{def:gamma-lipshitz}
Let $(S,\Sigma)$ be a countably generated measurable space and let $\mu$ be a finite measure on $(S,\Sigma)$. Then $L^2(S,\mu)$ is separable and we may define 
\[
L^2_\gamma(S,\mu;E) := \gamma\big(L^2(S,\mu),E\big) \cap L^2(S,\mu;E)
\]
as the space of all strongly $\mu$-measurable functions $\phi:S\to E$ for which
\begin{equation}
\label{eqn:L^2_gamma-norm}
\|\phi\|_{L^2_\gamma(S,\mu;E)} := \|\phi\|_{\gamma(L^2(S,\mu),E)} + \|\phi\|_{L^2(S,\mu;E)} 
\end{equation}
is finite. One easily checks that the simple functions $S\to E$ are dense in $L^2_\gamma(S,\mu;E)$.

For non-zero separable Hilbert space $H$ and Banach spaces $E,F$, we say a strongly continuous function $f:S\times E\to\mathcal{L}(H,F)$ is \emph{$L^2_\gamma$-Lipschitz with respect to $\mu$} if for each $x\in E$ we have $f(\cdot,x)\in \gamma\big(L^2(S,\mu;H),F\big)$ and for all simple functions $\phi,\psi:S\to E$ we have 
\begin{equation}
\label{eqn:gamma-lipshitz}
  \|f(\cdot,\phi) - f(\cdot,\psi)\|_{\gamma(L^2(S,\mu;H),F)} \leq C\|\phi - \psi\|_{L^2_\gamma(S,\mu;E)}.
\end{equation}
\end{defn}

\begin{lemma}[Lemma 5.1 of \cite{VanN-Weis_1}]
If $f:S\times E\to\mathcal{L}(H,F)$ is an $L^2_\gamma$-Lipschitz function with respect to $\mu$, then the operator $\phi \mapsto f(\cdot,\phi(\cdot))$ extends uniquely to a Lipshitz mapping from $L^2_\gamma(S,\mu;E)$ into $\gamma\big(L^2(S,\mu;H),F\big)$ represented by the function $f(\cdot,\phi(\cdot))$.
\end{lemma}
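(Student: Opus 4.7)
The plan is to use the density of simple functions in $L^2_\gamma(S,\mu;E)$ (noted just above the definition of $L^2_\gamma$-Lipschitz) together with the completeness of $\gamma\big(L^2(S,\mu;H),F\big)$ to extend by continuity. Denote the candidate map by $T\phi := f(\cdot,\phi(\cdot))$. On a simple function $\phi = \sum_{n=1}^N x_n \mathbbm{1}_{A_n}$ with $A_n$ disjoint and $\mu(A_n)<\infty$, we have $T\phi = \sum_{n=1}^N f(\cdot,x_n)\mathbbm{1}_{A_n}$. The hypothesis that $f(\cdot,x)\in\gamma\big(L^2(S,\mu;H),F\big)$ for each $x\in E$, together with the fact that multiplication by $\mathbbm{1}_{A_n}$ corresponds to pre-composition with an orthogonal projection on $L^2(S,\mu;H)$ (which is an ideal property of the $\gamma$-norm), shows $T\phi\in\gamma\big(L^2(S,\mu;H),F\big)$.

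The Lipschitz estimate \eqref{eqn:gamma-lipshitz} is exactly the statement that
\[
 \|T\phi - T\psi\|_{\gamma(L^2(S,\mu;H),F)} \leq C \|\phi-\psi\|_{L^2_\gamma(S,\mu;E)}
\]
for all simple $\phi,\psi$. Since simple functions are dense in $L^2_\gamma(S,\mu;E)$ and $\gamma\big(L^2(S,\mu;H),F\big)$ is a Banach space, $T$ has a unique Lipschitz extension $\tilde T:L^2_\gamma(S,\mu;E)\to\gamma\big(L^2(S,\mu;H),F\big)$ with the same Lipschitz constant $C$.

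It remains to verify that for general $\phi\in L^2_\gamma(S,\mu;E)$ the extended operator $\tilde T\phi$ is in fact represented by the function $f(\cdot,\phi(\cdot))$ in the sense of Lemma \ref{lemma:ness&suff_stock_integrable} (applied to the measure $\mu$ in place of time). Choose simple $\phi_n\to\phi$ in $L^2_\gamma(S,\mu;E)$; in particular $\phi_n\to\phi$ in $L^2(S,\mu;E)$, so passing to a subsequence we may assume $\phi_n(s)\to\phi(s)$ for $\mu$-a.e.\ $s\in S$. Strong continuity of $f$ in its second argument then yields $f(s,\phi_n(s))h\to f(s,\phi(s))h$ for every $h\in H$ and $\mu$-a.e.\ $s$. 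On the other hand, $T\phi_n = f(\cdot,\phi_n(\cdot))\to \tilde T\phi$ in $\gamma$-norm, hence, by evaluation against any $h\in H$, $f(\cdot,\phi_n(\cdot))h\to (\tilde T\phi)h$ in $L^2(S,\mu;F)$. Comparing the pointwise a.e.\ and $L^2$-limits identifies $(\tilde T\phi)h$ with $f(\cdot,\phi(\cdot))h$ for each $h$, so $\tilde T\phi$ is represented by $f(\cdot,\phi(\cdot))$ as required.

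The main obstacle is the last paragraph: one has to be careful that the $\gamma$-norm limit and the pointwise-a.e.\ limit coincide, which is handled cleanly by evaluating on $H$ and invoking the uniqueness of limits in $L^2(S,\mu;F)$. Everything else reduces to the definition of $L^2_\gamma$-Lipschitz and a standard density/completion argument.
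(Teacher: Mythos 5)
The paper itself states this lemma without proof (it is quoted from Lemma 5.1 of \cite{VanN-Weis_1}), so I compare your argument with the standard one. Your first two steps are fine: the ideal property of the $\gamma$-norm (multiplication by $\mathbbm{1}_{A_n}$ is composition with an orthogonal projection of $L^2(S,\mu;H)$) shows $\phi\mapsto f(\cdot,\phi(\cdot))$ maps simple functions into $\gamma\big(L^2(S,\mu;H),F\big)$, and the definition of $L^2_\gamma$-Lipschitz plus density of simple functions and completeness of the target gives the unique Lipschitz extension $\widetilde{T}$.

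The gap is in the identification step. You claim that $\gamma$-norm convergence of $T\phi_n\to\widetilde{T}\phi$ yields $f(\cdot,\phi_n(\cdot))h\to(\widetilde{T}\phi)h$ in $L^2(S,\mu;F)$ ``by evaluation against $h\in H$''. Two problems: first, $(\widetilde{T}\phi)h$ is not defined at this point --- $\widetilde{T}\phi$ is an operator on $L^2(S,\mu;H)$, and giving meaning to its ``evaluation at $h$'' as an $F$-valued function of $s$ presupposes exactly the representation you are trying to prove. Second, even for operators represented by functions, the $\gamma\big(L^2(S,\mu;H),F\big)$-norm does not dominate the $L^2(S,\mu;F)$-norm of $s\mapsto\Psi(s)h$ in a general Banach space $F$: this inclusion requires cotype $2$, and fails for instance for $F=L^q$ with $q>2$ (where the comparison goes the other way, via Minkowski's inequality), a case squarely within the UMD setting of this paper. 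The correct route is dual: $\gamma$-convergence dominates operator-norm convergence, so for each $x^*\in F^*$ one has $f(\cdot,\phi_n(\cdot))^*x^*=(T\phi_n)^*x^*\to(\widetilde{T}\phi)^*x^*$ in $L^2(S,\mu;H)$; after passing to a subsequence with $\phi_n\to\phi$ $\mu$-a.e.\ and with a.e.\ convergence in $H$ of these adjoints, strong continuity of $f$ gives, for a.e.\ $s$ and every $h\in H$, $\langle h,f(s,\phi_n(s))^*x^*\rangle\to\langle h,f(s,\phi(s))^*x^*\rangle$, and uniqueness of (weak) limits in $H$ yields $(\widetilde{T}\phi)^*x^*=f(\cdot,\phi(\cdot))^*x^*$ $\mu$-a.e. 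This is precisely the statement that $\widetilde{T}\phi$ is represented by $f(\cdot,\phi(\cdot))$, and it repairs your final paragraph; the rest of your argument stands.
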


For an important class of examples, the \emph{Nemytskii maps} we refer to \cite{VanN-Weis_1} Example 5.5.

\section{Results}
\label{sec:Results}

Consider the following delayed stochastic Cauchy problem
\begin{equation}
\label{eqn:DSDE}
\left\{
\begin{array}{rll}
  \ud U(t) &= &\big(AU(t) + F(t,U_t)\big) \ud t + B(t,U_t)\ud W_H(t),\; t\in[0,T_0]\\
  U_0&= &\Phi.
\end{array} \right.
\end{equation}
We make the following assumptions on $A, F, B, U_0$ and the constants $\eta, \theta_F, \theta_B \geq 0$, $\tau\in(1,2]$ and $p>2$:
\begin{itemize}
\item[(D1)] $A$ generates an analytic $C_0$-semigroup on a UMD Banach space $E$ of type $\tau$ and $W_H = \big(W_H(t)\big)_{t\in[0,T_0]}$ %\subset \mathcal{L}\big(H,L^2(\Omega)\big)$ 
is an $H$-cylindrical Brownian motion on Hilbert space $H$.
\item[(D2)] $0\leq \eta + \theta_F < \frac{3}{2} - \frac{1}{\tau}$ and $0<\eta + \theta_B + \frac{1}{p} < \frac{1}{2}$.
\item[(D3)] The history space $\mathscr{B}$ is of the form $L^p_g(-\infty,0;E)\times E$ introduced in Example \ref{ex:weightedhistoryspace}
\item[(D4)] The function $F:[0,T_0]\times\Omega\times\mathscr{B}_\eta \to E_{-\theta_F}$ is continuous on $[0,T_0]$ and is Lipschitz and of linear growth on the history space $\mathscr{B}$ uniformly with respect to $[0,T_0]\times\Omega$ . In other words, there exist constants $L_F, C_F \geq 0$ such that for all $t\in[0,T_0], \omega\in \Omega$ and $\phi, \psi \in \mathscr{B}_\eta$ we have 
  \begin{equation}
    \label{eqn:F-lipshitz}
    \|F(t,\omega,\phi) - F(t,\omega,\psi)\|_{E_{-\theta_F}} \leq L_F\|\phi - \psi\|_{\mathscr{B}_\eta}
  \end{equation}
and
  \begin{equation}
    \label{eqn:F-linear-growth}
    \|F(t,\omega,\phi)\|_{E_{-\theta_F}} \leq C_F\big(1+\|\phi\|_{\mathscr{B}_\eta}\big)
  \end{equation}
and moreover, that for all $\phi\in\mathscr{B}_\eta$ the function $(t,\omega)\mapsto F(t,\omega,\phi)$ is measurable and adapted (to $\mathscr{F}_t$) in $E_{-\theta_F}$.
\item[(D5)] The function $B:[0,T_0]\times\Omega\times\mathscr{B}_\eta \to \mathcal{L}(H,E_{-\theta_B})$ is continuous on $[0,T_0]$ and $L_\gamma^2$-Lipschitz uniformly on $\Omega$. In other words, there exist constants $L_B^\gamma, C_B^\gamma \geq 0$ such that for any finite measure $\mu$ on $([0,T_0],\mathbb{B}_{[0,T_0]})$, all $\omega\in\Omega$ and all $\phi, \psi :(-\infty,T_0]\to E_\eta$, continuous on $[0,T_0]$ with the property that the functions $(t\mapsto\phi_t)$ and $(t\mapsto\psi_t)$ lie in $L^2_\gamma(0,T_0,\mu;\mathscr{B}_\eta)$ we have
  \begin{align}
    \label{eqn:B-lipshitz}
    \|B(\cdot,\omega,\phi_0) - B(\cdot,\omega,\psi_0)\|&_{\gamma(L^2(0,T_0,\mu;H),E_{-\theta_B})}\\
&\leq L_B^\gamma\|t\mapsto(\phi - \psi)_t\|_{L_\gamma^2(0,T_0,\mu;\mathscr{B}_\eta)}\nonumber
  \end{align}
and
  \begin{equation}
    \label{eqn:B-linear-growth}
    \|B(\cdot,\omega,\phi)\|_{\gamma(L^2(0,T_0,\mu;H),E_{-\theta_B})} \leq C_B^\gamma\big(1+\|t\mapsto\phi_t\|_{L_\gamma^2(0,T_0,\mu;\mathscr{B}_\eta)}\big)
  \end{equation}
and moreover, for all $\phi\in\mathscr{B}_\eta$ the map $(t,\omega)\mapsto B(t,\omega,\phi)$ is $H$-strongly measurable and adapted (to $\mathscr{F}_t$) in $\mathcal{L}(H,E_{-\theta_B})$.
\item[(D6)] The initial history data $U_0 = \Phi:\Omega  \to \curlyB_\eta$ is strongly $\mathscr{F}_0$-measurable, $\Phi(0)\in L^p(\Omega;E_\eta)$ and if $\widehat{\Phi}$ represents the extension of $\Phi$ to $(-\infty,T_0]$ by
\begin{equation}
\label{eqn:defHat}
 \widehat{\Phi}(t) := \left\{ \begin{array}{ll}
			  0 & t\in (0,T_0]\\
			  \Phi(t) & t\in(-\infty,0].
			\end{array}\right.
\end{equation}
then the function mapping $s$ to $(t-s)^{-\alpha}\widehat{\Phi}(\omega)_s$ lies in $L^2_\gamma\big(0,t;\curlyB_\eta\big)$ for all $t\in[0,T_0]$, $\alpha\in(0,1/2)$ and almost all $\omega\in\Omega$ with
\begin{equation}
\label{eqn:Phi_gamma_lipschitz}
 \sup_{t\in[0,T_0]}\mathbb{E}\big\|s\mapsto(t-s)^{-\alpha}\widehat{\Phi}_s\big\|_{L^2_\gamma(0,t;\curlyB_\eta)} <\infty.
\end{equation}
\end{itemize}

\begin{defn}
A process $\big(U(t)\big)_{t\in(-\infty,T_0]}$ is said to be a \emph{mild solution} of \eqref{eqn:DSDE} if $U: [0,T_0]\times\Omega\to E_\eta$ is strongly measurable, adapted to $\mathscr{F}_t$ and for all $t\in[0,T_0]$
\begin{itemize}
\item[(i)] the function $s\mapsto S(t-s)F(s,U_s)$ is in $L^0\big(\Omega; L^1(0,t;E)\big)$;
\item[(ii)] the function $s\mapsto S(t-s)B(s,U_s)$ is $H$-strongly measurable, adapted to $\mathscr{F}_t$ and in $\gamma\big(L^2(0,t;H),E\big)$ almost surely;
\item[(iii)] $U(t) = S(t)\Phi(0) + S*F(\cdot,U_\cdot)(t) + S \diamond B(\cdot,U_\cdot)(t)$ almost surely.
\end{itemize}
\end{defn}

Hence for a given solution $U$ the deterministic convolution is pathwise well defined as a Bochner integral and the stochastic convolution is well defined by \eqref{lemma:ness&suff_stock_integrable}.

We will prove an existence and uniqueness result for \eqref{eqn:DSDE} using a fixed point argument in a scale of Banach spaces introduced by van Neerven et al. in \cite{VanN-Weis_1}. Fix $T\in(0,T_0]$, $p\in[1,\infty)$ and $\alpha\in(0,1/2)$ and define $V^p_{\alpha,\infty}\big([0,T]\times\Omega;E\big)$ to be the space of all continuous adapted processes $\phi:[0,T]\times\Omega \to E$ for which 
\begin{align}
\label{eqn:defnVnorm}
\|\phi&\|_{V^p_{\alpha,\infty}([0,T]\times\Omega;E)}\\
  &:= \Big(\mathbb{E}\|\phi\|^p_{C([0,T];E)}\Big)^{\frac{1}{p}} + \sup_{t\in[0,T]}\Big(\mathbb{E}\big\|s\mapsto (t-s)^{-\alpha}\phi(s)\big\|^p_{\gamma(L^2(0,t),E)}\Big)^{\frac{1}{p}}\nonumber
\end{align}
is finite. Similarly, define $V^p_{\alpha,p}\big([0,T]\times\Omega;E\big)$ to be the space of pathwise continuous and adapted processes $\phi:[0,T]\times\Omega \to E$ for which 
\begin{align} 
\label{eqn:defnV_pnorm}
&\|\phi\|_{V^p_{\alpha,p}([0,T]\times\Omega;E)}\\
  &:= \Big(\mathbb{E}\|\phi\|^p_{C([0,T];E)}\Big)^{\frac{1}{p}} + \Bigg(\int_0^T\mathbb{E}\big\|s\mapsto (t-s)^{-\alpha}\phi(s)\big\|^p_{\gamma(L^2(0,t),E)}\ud t\Bigg)^{\frac{1}{p}} < \infty.\nonumber
\end{align}
Under identification of processes which are indistinguishable under the above norms, $V^p_{\alpha,\infty}\big([0,T]\times\Omega;E\big)$ and $V^p_{\alpha,p}\big([0,T]\times\Omega;E\big)$ become Banach spaces. 

Our main result, Theorem \ref{thm:6.2}, is to prove the existence and uniqueness of a mild solution of \eqref{eqn:DSDE} in each of the spaces above with the initial history data $\Phi$.

Consider the fixed point operator defined on $V^p_{\alpha,\infty}$ or $V^p_{\alpha,p}$ by
\begin{equation}
\label{eqn:fixedpointoperator}
L_T(\phi)(t) := S(t)\Phi(0) + S*F(\cdot,\widetilde{\phi}_\cdot)(t) + S \diamond B(\cdot,\widetilde{\phi}_\cdot)(t),\quad t\in[0,T]
\end{equation}
where $\widetilde{\phi}$ is the extension of $\phi$ to $(-\infty,T]$ by $\Phi$,
\begin{equation}
\label{eqn:deftilde}
\widetilde{\phi}(t) := \left\{ \begin{array}{ll}
			  \phi(t) & t\in [0,T]\\
			  \Phi(t) & t\in(-\infty,0).
			\end{array}\right.
\end{equation}
We will show that $L_T$ is well defined and becomes a strict contraction for small enough $T$.

\begin{prop}
\label{prop:6.1}
Suppose (D1) -- (D6) are satisfied and choose $\alpha\in(0,1/2)$ such that 
\[
\eta + \theta_B < \alpha - \frac{1}{p}.
\]
The operator $L_T$ is well defined and bounded on the spaces $V = V^p_{\alpha,\infty}\big([0,T]\times\Omega;E_\eta\big)$ and $V^p_{\alpha,p}\big([0,T]\times\Omega;E_\eta\big)$, and moreover, there exist constants $C, C_T$ with $\lim_{T\downarrow0}C_T = 0$ such that for all $\phi,\psi\in V$
\begin{equation}
\label{eqn:L_Lipschitz}
\|L_T(\phi) - L_T(\psi)\|_V \leq C_T\|\phi - \psi\|_V
\end{equation}
and 
\begin{align}
\label{eqn:L_linear_growth}
\|L_T(\phi)\|_V &\leq C\Big(1 + \big(\mathbb{E}\|\Phi\|^p_{\curlyB_\eta}\big)^\frac{1}{p} \nonumber\\
&+ \big(\!\!\sup_{t\in[0,T]}\mathbb{E}\big\|s\mapsto(t-s)^{-\alpha}\widehat{\Phi}_s\big\|^p_{L^2_\gamma(0,t;\curlyB)}\big)^{\frac{1}{p}}\Big) + C_T\|\phi\|_V
\end{align}
\end{prop}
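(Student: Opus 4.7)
The plan is to decompose $L_T(\phi)(t) = T_1(t) + T_2(t) + T_3(t)$ with $T_1(t) := S(t)\Phi(0)$, $T_2(t) := \big(S*F(\cdot,\widetilde{\phi}_\cdot)\big)(t)$ and $T_3(t) := \big(S\diamond B(\cdot,\widetilde{\phi}_\cdot)\big)(t)$, and then to estimate each piece separately in the two components of the norm \eqref{eqn:defnVnorm} (the argument for $V^p_{\alpha,p}$ is identical with $\sup_{t\in[0,T]}\mathbb{E}$ replaced by $\int_0^T\mathbb{E}\ud t$). The bridge from history norms to $\|\phi\|_V$ and initial data is Lemma \ref{lemma:def:phasespace} applied in $E_\eta$, namely
\[
\|\widetilde{\phi}_t\|_{\curlyB_\eta} \leq K(t)\sup_{s\in[0,t]}\|\phi(s)\|_{E_\eta} + M(t)\|\Phi\|_{\curlyB_\eta},
\]
so on $[0,T_0]$ the locally bounded factors $K,M$ are absorbed into $C$.

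For $T_1$, the analytic estimate \eqref{eqn:analyticproperty} immediately gives a bound on $\|S(\cdot)\Phi(0)\|_{C([0,T];E_\eta)}$, while the weighted $\gamma$-norm of $s\mapsto(t-s)^{-\alpha}S(s)\Phi(0)$ is controlled pathwise, as a deterministic $E_\eta$-valued curve, by the initial-data convolution lemma of \cite{VanN-Weis_1}, Section 3, producing a factor $T^\varepsilon$ for some $\varepsilon>0$. Since $T_1$ is independent of $\phi$ it drops out of \eqref{eqn:L_Lipschitz}.

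For $T_2$, apply (D4) and the history-space inequality to obtain
\[
\|F(s,\omega,\widetilde{\phi}_s)-F(s,\omega,\widetilde{\psi}_s)\|_{E_{-\theta_F}} \leq C\sup_{r\in[0,s]}\|\phi(r)-\psi(r)\|_{E_\eta},
\]
and analogously for linear growth (where a $\|\Phi\|_{\curlyB_\eta}$ term appears from the extension to $(-\infty,0]$). The deterministic convolution estimates from \cite{VanN-Weis_1}, Section 3, together with the condition $\eta+\theta_F<\tfrac{3}{2}-\tfrac{1}{\tau}$ from (D2), then yield a bound on $\|T_2\|_V$ of the required form with $C_T\downarrow 0$.

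For $T_3$, which is the main obstacle, the $L^2_\gamma$-Lipschitz hypothesis (D5) is formulated on curves $s\mapsto\widetilde{\phi}_s$ with values in $\curlyB_\eta$, so it is necessary to bound $\|s\mapsto\widetilde{\phi}_s\|_{L^2_\gamma(0,t,\mu;\curlyB_\eta)}$ (and its analogue for differences) in terms of data. I would split $\widetilde{\phi}_s$ pointwise in $s$ into a $\phi$-part, supported on $[0,t]$, and the $\widehat{\Phi}$-part, supported on $(-\infty,0]$; Corollary \ref{cor:product_gamma_fub} allows me to exchange $\gamma(L^2;L^p_g\times E)$ and $L^p_g\big(\gamma(L^2)\big)\times\gamma(L^2)$ and hence to express each piece as a $\gamma$-norm in $E_\eta$ weighted against the history measure. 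The $\phi$-piece is controlled by the $\gamma$-component of $\|\phi\|_V$ after trading the history weight $g(\theta)$ against $(t-s)^{-\alpha}$ via a change of variables and the locally bounded comparison function $G$ from Example \ref{ex:weightedhistoryspace}; the $\widehat{\Phi}$-piece is exactly what is bounded by assumption \eqref{eqn:Phi_gamma_lipschitz}. Combining with the stochastic convolution estimates of \cite{VanN-Weis_1}, Section 4, under the hypothesis $\eta+\theta_B<\alpha-\tfrac{1}{p}$ then delivers the $T_3$ estimate with a $C_T\downarrow 0$ prefactor. Finally, \eqref{eqn:L_Lipschitz} follows because $T_1$ cancels in the difference and $\widetilde{\phi}_s-\widetilde{\psi}_s=(\phi-\psi)(s)\mathbbm{1}_{[0,t]}(s)$ removes any $\Phi$-dependence, while \eqref{eqn:L_linear_growth} is obtained by the same estimates without cancellation.
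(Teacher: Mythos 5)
Your proposal follows exactly the paper's own proof: the same decomposition into initial part, deterministic convolution and stochastic convolution, the same use of inequality \eqref{eqn:KM_ineq} from Lemma \ref{lemma:def:phasespace} to pass from $\curlyB_\eta$-norms of history segments to $\|\phi\|_V$ and $\|\Phi\|_{\curlyB_\eta}$, the same splitting of $s\mapsto\widetilde{\phi}_s$ into its $\phi$- and $\widehat{\Phi}$-parts via Corollary \ref{cor:product_gamma_fub} with the $\widehat{\Phi}$-part absorbed by (D6), and the same convolution estimates from Sections 3 and 4 of \cite{VanN-Weis_1}. The only loose details are inessential: for the $\phi$-piece of the shifted-history $\gamma$-norm the paper uses a change of variables, local integrability of $g$ on $[-t,0]$ and the fact that the shifted weighted $\gamma$-norms are dominated by the supremum defining $\|\phi\|_V$ (the comparison function $G$ is not needed there), and $\widetilde{\phi}_s-\widetilde{\psi}_s$ is the history segment of $\phi-\psi$ extended by zero to the past rather than the single value $(\phi-\psi)(s)$, though your conclusion that the $\Phi$-dependence cancels in the difference is correct.
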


First we give two more technical lemmas that will be required in the proof. These are exactly as in the proof of Proposition 6.1 in \cite{VanN-Weis_1}, but are presented separately here for clarity, as the delay plays no part in these estimates.

\begin{lemma}
\label{lemma:deterministic_part_a}
Assume (D1)-(D6). Under the conditions of Proposition \ref{prop:6.1}, if $\Psi\in L^p\big(\Omega;C([0,T];E_{-\theta_F})\big)$ then
\begin{equation}
\label{eqn:deterministic_estimate_3}
  \bn S*\Psi\bn_{V^p_{\alpha,\infty}([0,T]\times\Omega;E_\eta)} \leq CT^{\min\{\frac{1}{2} - \alpha, 1-\eta-\theta_F\}}\|\Psi\|_{L^p(\Omega;C([0,T];E_{-\theta_F}))}.
\end{equation}
\end{lemma}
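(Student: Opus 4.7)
I would prove this by estimating each of the two terms in the $V^p_{\alpha,\infty}$ norm \eqref{eqn:defnVnorm} separately and pathwise in $\omega$, then taking $L^p(\Omega)$ in the end. Fix $\omega\in\Omega$ and write $\psi(\cdot) := \Psi(\cdot,\omega) \in C([0,T];E_{-\theta_F})$. Both estimates should produce a power of $T$, and the minimum of the two exponents will give the final bound.

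For the sup-norm part, I would apply the analytic semigroup estimate \eqref{eqn:analyticproperty} from $E_{-\theta_F}$ to $E_\eta$ inside the Bochner integral:
\[
\|(S*\psi)(t)\|_{E_\eta} \leq \int_0^t \|S(t-s)\|_{\mathcal{L}(E_{-\theta_F},E_\eta)}\,\|\psi(s)\|_{E_{-\theta_F}}\,\ud s \leq C_{\eta+\theta_F}\,\frac{T^{1-\eta-\theta_F}}{1-\eta-\theta_F}\|\psi\|_{C([0,T];E_{-\theta_F})}.
\]
Since condition (D2) forces $\eta + \theta_F < 3/2 - 1/\tau \leq 1$, the integral is finite. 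Taking $\sup_{t\in[0,T]}$ and then raising to the $p$th power and integrating over $\Omega$ yields the contribution $CT^{1-\eta-\theta_F}\|\Psi\|_{L^p(\Omega;C([0,T];E_{-\theta_F}))}$.

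For the $\gamma$-norm part, I would estimate pathwise
\[
\big\|s\mapsto(t-s)^{-\alpha}(S*\psi)(s)\big\|_{\gL}
\]
uniformly in $t\in[0,T]$. The key tool is the $\gamma$-boundedness of the scaled semigroup family $\{u^{\eta+\theta_F}S(u) : u\in(0,T_0]\} \subset \mathcal{L}(E_{-\theta_F},E_\eta)$ (a standard consequence of analyticity, see Section~4 of \cite{VanN-Weis_1}), which I would invoke in combination with Lemma \ref{lemma:2.9} applied to the operator-valued kernel obtained after writing
\[
(t-s)^{-\alpha}(S*\psi)(s) = \int_0^s (t-s)^{-\alpha}(s-r)^{-(\eta+\theta_F)}\big[(s-r)^{\eta+\theta_F}S(s-r)\big]\psi(r)\,\ud r.
\]
A careful bookkeeping of the scalar weights, together with the $L^2(0,t)$-norm of $(t-\cdot)^{-\alpha}$ (finite since $\alpha<1/2$), produces a bound proportional to $T^{1/2-\alpha}\|\psi\|_{C([0,T];E_{-\theta_F})}$. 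Taking $L^p(\Omega)$ yields the contribution $CT^{1/2-\alpha}\|\Psi\|_{L^p(\Omega;C([0,T];E_{-\theta_F}))}$.

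The main obstacle is the $\gamma$-norm estimate in the third paragraph: unlike the Bochner estimate, the $\gamma$-norm cannot simply be controlled by the $L^\infty$ norm of the integrand without losing the UMD-compatible structure, so the $\gamma$-boundedness of the semigroup must be used rather than a crude operator norm bound. Once both contributions are in hand, summing them and using $a^r + a^s \leq 2a^{\min\{r,s\}}$ for $a\in[0,T_0]$ yields \eqref{eqn:deterministic_estimate_3}. Since the delay enters neither the semigroup nor the integrand $\Psi$ here, this argument is identical to the corresponding step in the proof of Proposition 6.1 of \cite{VanN-Weis_1}.
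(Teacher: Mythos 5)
Your estimate of the sup-norm part of the $V^p_{\alpha,\infty}$-norm is exactly the paper's: the pathwise Bochner bound via \eqref{eqn:analyticproperty} giving the factor $T^{1-\eta-\theta_F}$, followed by taking $L^p(\Omega)$. (One small omission: membership in $V^p_{\alpha,\infty}([0,T]\times\Omega;E_\eta)$ requires pathwise continuity of $S*\Psi$ in $E_\eta$, which the paper obtains from Lemma 3.6 of \cite{VanN-Weis_1} with $\alpha=1$, $\lambda=0$; you should record this.)

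The genuine gap is in the weighted $\gamma$-norm estimate. The paper does not reprove this bound: it observes that (D2) puts us in the hypotheses of Proposition 3.5 of \cite{VanN-Weis_1} and quotes the estimate $T^{1/2-\alpha}\|\psi\|_{C([0,T];E_{-\theta_F})}$. That proposition is proved using the \emph{type} $\tau$ of $E$ (H\"older/Besov regularity of the deterministic convolution combined with an embedding of such functions into $\gamma(L^2(0,t),E_\eta)$), and this is precisely where the restriction $\eta+\theta_F<\tfrac{3}{2}-\tfrac{1}{\tau}$ in (D2) enters. Your replacement argument --- $\gamma$-boundedness of $\{u^{\eta+\theta_F}S(u)\}$ together with Lemma \ref{lemma:2.9} applied to the kernel $(t-s)^{-\alpha}(s-r)^{-(\eta+\theta_F)}\big[(s-r)^{\eta+\theta_F}S(s-r)\big]$ --- does not go through as sketched: Lemma \ref{lemma:2.9} handles pointwise multipliers $M(s)\Psi(s)$, not integral kernels in two variables, and a merely bounded $E_{-\theta_F}$-valued function $\psi$ does not by itself define an element of any $\gamma$-space for the multiplier to act on. The natural repair (fix $r$, treat $s\mapsto(t-s)^{-\alpha}(s-r)^{-(\eta+\theta_F)}\psi(r)$ as a rank-one element of $\gamma(L^2(0,t),E_{-\theta_F})$, then integrate in $r$ by the triangle inequality) forces square-integrability of $(s-r)^{-(\eta+\theta_F)}$ at $s=r$, i.e.\ $\eta+\theta_F<\tfrac12$, which is strictly smaller than the range allowed by (D2) since $\tfrac{3}{2}-\tfrac{1}{\tau}>\tfrac12$ for $\tau>1$. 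Tellingly, your argument never uses the type of $E$, so it cannot yield the stated estimate in its full generality; either restrict to $\eta+\theta_F<\tfrac12$ (weakening the lemma) or, as the paper does, invoke Proposition 3.5 of \cite{VanN-Weis_1} (or reproduce its type-based proof).
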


\begin{proof}
 Take $\psi\in C([0,T];E_{-\theta_F})$ and find bounds for the $V^p_{\alpha,\infty}\big([0,T]\times\Omega;E_\eta\big)$-norm of $S*\psi$. Applying Lemma 3.6 of \cite{VanN-Weis_1} with $\alpha = 1$ and $\lambda = 0$ we see that $S*\psi$ is continuous in $E_\eta$. The standard analytic property  \eqref{eqn:analyticproperty} of $S$ gives 
\begin{align}
\label{eqn:deterministic_estimate_1}
  \bn S*\psi\bn_{C([0,T];E_\eta)} &\leq C\int_0^t (t-s)^{-(\eta+\theta_F)}\ud s \|\psi\|_{C([0,T];E_{-\theta_F})}\nonumber\\
&\leq CT^{1 - \eta - \theta_F}\|\psi\|_{C([0,T];E_{-\theta_F})}.
\end{align}
By assumption (D2) we satisfy the conditions of Proposition 3.5 of \cite{VanN-Weis_1}, so it follows that
\begin{equation}
\label{eqn:deterministic_estimate_2}
\bn s\mapsto (t-s)^{-\alpha} S*\psi(s) \bn_{\gL} \leq T^{\frac{1}{2}-\alpha}\|\psi\|_{C([0,T];E_{-\theta_F})}.
\end{equation}
Now let $\Psi\in L^p\big(\Omega;C([0,T];E_{-\theta_F})\big)$. We apply \eqref{eqn:deterministic_estimate_1} and \eqref{eqn:deterministic_estimate_2} to the paths $\Psi(\cdot,\omega)$ and take expectations to see that $S*\Psi\in V^p_{\alpha,\infty}\big([0,T]\times\Omega;E\big)$ and
\[
  \bn S*\Psi\bn_{V^p_{\alpha,\infty}([0,T]\times\Omega;E_\eta)} \leq CT^{\min\{\frac{1}{2} - \alpha, 1-\eta-\theta_F\}}\|\Psi\|_{L^p(\Omega;C([0,T];E_{-\theta_F}))}.\qedhere
\]
\end{proof}

\begin{lemma}
\label{lemma:stochastic_part_a}
Assume (D1) - (D6). Under the conditions of Proposition \ref{prop:6.1} let $\Psi:[0,T]\times\Omega\to \mathcal{L}(H,E_{-\theta_B})$ be $H$-strongly measurable and adapted, and suppose that 
\begin{equation}
\label{eqn:Psi_assumption1}
\sup_{t\in[0,T]}\mathbb{E}\big\|s\mapsto(t-s)^{-\alpha}\Psi(s)\big\|^p_{\gamma(L^2(0,t;H),E_{-\theta_B})} < \infty.
\end{equation}
Then there exists $\varepsilon' >0$ such that
\begin{align}
\label{eqn:stochastic_estimate1}
\big\|S&\diamond\Psi\big\|_{V^p_{\alpha,\infty}([0,T]\times\Omega;E_\eta)} \\
& \leq \quad CT^{\varepsilon'} \Big(\sup_{t\in[0,T]}\mathbb{E}\big\|s\mapsto(t-s)^{-\alpha}\Psi(s)\big\|^p_{\gamma(L^2(0,t;H),E_{-\theta_B})}\Big)^{\frac{1}{p}}.\nonumber
\end{align}
\end{lemma}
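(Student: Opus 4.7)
The plan is to reduce the bound to the stochastic convolution estimates already established in Section 4 of \cite{VanN-Weis_1}, splitting the two constituent pieces of the $V^p_{\alpha,\infty}$-norm. Throughout, the strict inequality $\eta + \theta_B + 1/p < \alpha < 1/2$ from the hypothesis of Proposition~\ref{prop:6.1} is what provides the ``room'' required to extract a positive power of $T$.

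For the continuous part $\big(\mathbb{E}\|S\diamond\Psi\|^p_{C([0,T];E_\eta)}\big)^{1/p}$, the idea is to apply a factorisation of the form $S(t-s) = S(t-r)S(r-s)$ together with a stochastic Fubini argument so that the stochastic convolution is recast as a deterministic convolution of a $\gamma$-valued integrand. The analyticity estimate \eqref{eqn:analyticproperty} and the $\gamma$-boundedness of the family $\{u^{\eta+\theta_B}S(u):u\in(0,T_0]\}$, combined with Lemma \ref{lemma:2.9} to pull the semigroup outside the $\gamma$-norm, yield a bound
\[
\big(\mathbb{E}\|S\diamond\Psi\|^p_{C([0,T];E_\eta)}\big)^{\frac1p} \leq C T^{\varepsilon_1}\sup_{t\in[0,T]}\big(\mathbb{E}\|s\mapsto(t-s)^{-\alpha}\Psi(s)\|^p_{\gamma(L^2(0,t;H),E_{-\theta_B})}\big)^{\frac1p},
\]
where $\varepsilon_1 > 0$ arises from a Beta-function integral controlled by the gap $\alpha - (\eta + \theta_B + 1/p)$. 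This is precisely the stochastic maximal regularity developed in \cite{VanN-Weis_1}.

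For the weighted Gaussian part, fix $t\in[0,T]$ and consider $\mathbb{E}\|s\mapsto(t-s)^{-\alpha}(S\diamond\Psi)(s)\|^p_{\gamma(L^2(0,t),E_\eta)}$. Using Proposition \ref{prop:2.6} ($\gamma$-Fubini) to exchange the $\gamma$- and Lebesgue-norms, and then applying Lemma \ref{lemma:2.9} with the same $\gamma$-bounded family $\{u^{\eta+\theta_B}S(u)\}$, the semigroup factor can be extracted and $\Psi$ re-expressed in the weighted $\gamma$-norm. The remaining iterated kernel is of Beta-integral type in the variables $s$ and $r$ (with $r<s<t$) and integrates to a factor $T^{\varepsilon_2}$ with $\varepsilon_2 > 0$; taking expectations and then the supremum over $t \in [0,T]$ produces the second half of the claim.

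The main obstacle is verifying that the exponents appearing in the iterated kernels sit in the range where the Beta integrals converge \emph{and} produce a strictly positive power of $T$; once this is secured, setting $\varepsilon' := \min\{\varepsilon_1,\varepsilon_2\} > 0$ and combining the two bounds completes the proof. Since the delay plays no role here (the integrand $\Psi$ is already an $\mathcal{L}(H,E_{-\theta_B})$-valued process on $[0,T]$), this argument reproduces verbatim the relevant portion of the proof of Proposition 6.1 in \cite{VanN-Weis_1}.
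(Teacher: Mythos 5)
Your proposal is correct and follows essentially the same route as the paper: you split the $V^p_{\alpha,\infty}$-norm into its sup-norm part and its weighted $\gamma$-norm part, bound each by a positive power of $T$ times $\big(\sup_{t\in[0,T]}\mathbb{E}\|s\mapsto(t-s)^{-\alpha}\Psi(s)\|^p_{\gamma(L^2(0,t;H),E_{-\theta_B})}\big)^{1/p}$ using the stochastic convolution estimates of \cite{VanN-Weis_1}, and set $\varepsilon'$ to be the minimum of the two exponents. The only difference is cosmetic: the paper invokes Propositions 4.2 and 4.5 of \cite{VanN-Weis_1} as black boxes (yielding $\varepsilon'=\min\{1/2-\eta-\theta_B,\varepsilon\}$), whereas you sketch the factorisation/$\gamma$-multiplier arguments underlying those propositions.
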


\begin{proof}
 Let $\Psi:[0,T]\times\Omega\to \mathcal{L}(H,E_{-\theta_B})$ be $H$-strongly measurable and adapted, and suppose that 
\begin{equation}
\sup_{t\in[0,T]}\mathbb{E}\big\|s\mapsto(t-s)^{-\alpha}\Psi(s)\big\|^p_{\gamma(L^2(0,t;H),E_{-\theta_B})} < \infty.
\end{equation}
We estimate the $V^p_{\alpha,\infty}\big([0,T]\times\Omega;E_\eta\big)$-norm of $S\diamond\Psi$. From Proposition 4.2 of \cite{VanN-Weis_1} there exists an $\varepsilon>0$ such that 
\[
\mathbb{E}\big\|S\diamond\Psi\big\|^p_{C([0,T];E_\eta)} \leq C^pT^{\varepsilon p} \sup_{t\in[0,T]}\mathbb{E}\big\|s\mapsto(t-s)^{-\alpha}\Psi(s)\big\|^p_{\gamma(L^2(0,t;H),E_{-\theta_B})}.
\]
For the other part of the norm, by Proposition 4.5 of \cite{VanN-Weis_1} we obtain that
\begin{align}
\mathbb{E}\big\|s\mapsto&(t-s)^{-\alpha}S\diamond\Psi(s)\big\|^p_{\gamma(L^2(0,t;H),E_{\eta})}\nonumber\\
& \leq \quad C^pT^{(\frac{1}{2}-\eta-\theta_B)p} \mathbb{E} \big\|s\mapsto(t-s)^{-\alpha}\Psi(s)\big\|^p_{\gamma(L^2(0,t;H),E_{-\theta_B})}.\nonumber
\end{align}
Combining the above we conclude that for $\varepsilon' := \min\{1/2-\eta-\theta_B,\varepsilon\}$
\begin{align}
\big\|S&\diamond\Psi\big\|_{V^p_{\alpha,\infty}([0,T]\times\Omega;E_\eta)} \\
& \leq \quad CT^{\varepsilon'} \Big(\sup_{t\in[0,T]}\mathbb{E}\big\|s\mapsto(t-s)^{-\alpha}\Psi(s)\big\|^p_{\gamma(L^2(0,t;H),E_{-\theta_B})}\Big)^{\frac{1}{p}}.\nonumber\qedhere
\end{align}
\end{proof}

We are now ready to proceed with the main result.

\begin{proof}[Proof (of Proposition \ref{prop:6.1})]
We proceed along the same lines as the proof of Proposition 6.1 in \cite{VanN-Weis_1}, adapting as necessary for the delay. As in \cite{VanN-Weis_1} we will give a detailed proof only in the case that $V = V^p_{\alpha,\infty}\big([0,T]\times\Omega;E\big)$, the other case being essentially the same.

\mbox{}\\
\emph{Step 1} (Estimating the initial part).
It is clear that for $\omega\in\Omega$
\[
\big\|S\Phi(0)\bn_{C([0,T];E_\eta)} \leq C\|\Phi(0)\|_{E_\eta}.
\]
Let $\varepsilon \in (0,1/2).$ From Lemma \ref{lemma:2.9} and Proposition 4.1 of \cite{VanN-Weis_1} we infer that for a fixed $\omega\in\Omega$, $t\in[0,T]$
\begin{align}
\big\|s \mapsto (t-s)^{-\alpha}S(s)\Phi(0)\big\|&_{\gL} \nonumber\\
&\leq \quad C\big\|s \mapsto (t-s)^{-\alpha} s^{-\varepsilon}\Phi(0)\big\|_{\gL}\nonumber\\
&\leq \quad C\big\|s \mapsto (t-s)^{-\alpha}s^{-\varepsilon}\big\|_{L^2(0,t)}\|\Phi(0)\|_{E_\eta}\nonumber\\
&\leq \quad C \|\Phi(0)\|_{E_\eta}.\nonumber
\end{align}
Then taking expectations, we get 
\begin{equation}
\label{eqn:initial_estimate}
\bn S\Phi(0)\bn_{V^p_{\alpha,\infty}([0,T]\times\Omega;E)} \leq C\|\Phi(0)\|_{L^p(\Omega;E_\eta)}.
\end{equation}
\mbox{}\\
\emph{Step 2} (Estimating the deterministic convolution).

Let $\phi,\psi\in V^p_{\alpha,\infty}\big([0,T]\times\Omega;E_\eta\big)$. Since $F$ is continuous on $[0,T]\times\curlyB_\eta$ and the mapping $t\mapsto\phi_t$ is continuous by Lemma \ref{lemma:def:phasespace}, both $F(\cdot,\widetilde{\phi}_\cdot)$ and $F(\cdot,\widetilde{\psi}_\cdot)$
belong to $L^p\big(\Omega;C([0,T];E_{-\theta_F})\big)$ and we can apply Lemma \ref{lemma:deterministic_part_a}. The estimate \eqref{eqn:deterministic_estimate_3} shows that $S*F(\cdot,\widetilde{\phi}_\cdot)$ and $S*F(\cdot,\widetilde{\psi}_\cdot) \in V^p_{\alpha,\infty}\big([0,T]\times\Omega;E_\eta\big)$. Note that by the property \eqref{eqn:KM_ineq} of Lemma \ref{lemma:def:phasespace}
\begin{equation}
\label{eqn:curlyB_estimate}
\|\widetilde{\phi}_t-\widetilde{\psi}_t\|_{\curlyB_\eta} \leq \sup_{s\in[0,T]} K(s)\|\phi-\psi\|_{C([0,T],E_{\eta})}
\end{equation}
since $\widetilde{\phi}_0 = \widetilde{\psi}_0 = \Phi$. Combining \eqref{eqn:deterministic_estimate_3} with the property that $F$ is Lipschitz in its $\curlyB_\eta$-variable \eqref{eqn:F-lipshitz} and setting $\delta = \min\{1/2 - \alpha, 1-\eta-\theta_F\}$ yields
\begin{align}
\label{eqn:F_contraction}
\bn S*\big(F(\cdot,\widetilde{\phi}_\cdot) - &F(\cdot,\widetilde{\psi}_\cdot)\big)\bn_{V^p_{\alpha,\infty}([0,T]\times\Omega;E_\eta)}\nonumber\\
& \leq \quad CT^\delta \bn F(\cdot,\widetilde{\phi}_\cdot) - F(\cdot,\widetilde{\psi}_\cdot)\bn_{L^p(\Omega;C([0,T];E_{-\theta_F}))} \nonumber\\
& = \quad CT^\delta \Big(\mathbb{E}\sup_{t\in[0,T]} \bn F(t,\widetilde{\phi}_t) - F(t,\widetilde{\psi}_t)\bn_{E_{-\theta_F}}^p \Big)^{\frac{1}{p}} \nonumber\\
& \leq \quad CT^\delta L_F \Big(\mathbb{E}\sup_{t\in[0,T]} \|(\widetilde{\phi}-\widetilde{\psi})_t\|_{\curlyB_\eta}^p\Big)^{\frac{1}{p}}\nonumber\\
& \leq \quad CT^\delta L_F K_{T_0}\Big(\mathbb{E} \|\phi - \psi\|_{C([0,T];E_\eta)}^p\Big)^{\frac{1}{p}}\nonumber\\
& \leq \quad CT^\delta L_F K_{T_0}\|\phi - \psi\|_{V^p_{\alpha,\infty}([0,T]\times\Omega;E_\eta)}.
\end{align}
\mbox{}\\
\emph{Step 3} (Estimating the stochastic convolution).

For $t\in[0,T]$, let $\mu_{t,\alpha}$ be the finite measure on $\big((0,t),\mathbb{B}_{(0,t)}\big)$ defined by
\[
\mu_{t,\alpha}(B) := \int_0^t (t-s)^{-2\alpha}\mathbbm{1}_B(s) \ud s.
\]
Notice that for a function $\phi\in C\big([0,t];E\big)$ we have 
\[
\phi\in\gamma\big(L^2(0,t,\mu_{t,\alpha}),E\big) \quad\Longleftrightarrow\quad \big[s\mapsto(t-s)^{-\alpha}\phi(s)\big]\in\gamma\big(L^2(0,t),E\big).
\]
and
\begin{align}
\|\phi\|_{L^2(0,t,\mu_{t,\alpha};E)} \quad &= \quad \big\|s\mapsto(t-s)^{-\alpha}\phi(s)\big\|_{L^2(0,t;E)} \nonumber\\
& \leq \quad Ct^{\frac{1}{2} - \alpha}\|\phi\|_{C([0,T];E)}.\nonumber
\end{align}

Now let $\phi,\psi \in V^p_{\alpha,\infty}\big([0,T]\times\Omega;E_\eta\big)$. Fix some $\omega\in\Omega$. The paths of $\phi, \psi$ belong to $L^2_\gamma\big(0,t,\mu_{t,\alpha};E_\eta\big)$ uniformly for $t\in[0,T]$ almost surely by the definition of the norm on $V^p_{\alpha,\infty}\big([0,T]\times\Omega;E_\eta\big)$ \eqref{eqn:defnVnorm} and that the path of $s\mapsto\widehat{\Phi}_s$ is in $L^2_\gamma\big(0,t,\mu_{t,\alpha};\curlyB_\eta\big)$ almost surely by assumption (D6). 

Recall the definition of $\widetilde{\phi}, \widetilde{\psi}$ from \eqref{eqn:deftilde}. We plan to show that $s\mapsto\widetilde{\phi}_s$ and $s\mapsto\widetilde{\psi}_s$ are contained in $L^2_\gamma\big(0,t,\mu_{t,\alpha};\curlyB_\eta\big)$ almost surely. From the definition \eqref{eqn:L^2_gamma-norm} we have that
\begin{align}
\big\|s\mapsto\widetilde{\phi}_s\big\|_{L^2_\gamma(0,t,\mu_{t,\alpha};\curlyB_\eta)} = \big\|s\mapsto\widetilde{\phi}_s\big\|_{L^2(0,t,\mu_{t,\alpha};\curlyB_\eta)}
+ \big\|s\mapsto \widetilde{\phi}_s\big\|_{\gamma(L^2(0,t,\mu_{t,\alpha}),\curlyB_\eta)}.\nonumber
\end{align}
For the first part, using property \eqref{eqn:KM_ineq} of Lemma \ref{lemma:def:phasespace}
\begin{align}
\label{eqn:tilde_gamma_lipshitz_1}
\big\|s\mapsto\widetilde{\phi}_s\big\|^2_{L^2(0,t,\mu_{t,\alpha};\curlyB_\eta)} \quad&=\quad
  \int_0^t\|\widetilde{\phi}_s\|^2_{\curlyB_\eta} \ud \mu_{t,\alpha}(s)\nonumber\\
&\leq\quad \int_0^t K^2_T\|\phi\|^2_{C([0,T];E_\eta)} + M^2_T\|\widetilde{\phi}_0\|^2_{\curlyB_\eta} \ud \mu_{t,\alpha}(s) \nonumber\\
&\leq\quad C \Big(\|\phi\|^2_{C([0,T];E_\eta)} + \|\Phi\|^2_{\curlyB_\eta}\Big)
\end{align}
where $K_T = \sup_{s\in[0,T]}K(s)$, $M_T=\sup_{s\in[0,T]}M(s)$. 
By the corollary to the $\gamma$-Fubini isomorphism (Corollary \ref{cor:product_gamma_fub}), since $\curlyB_\eta = L^p_g\big((-\infty,0];E_\eta\big)\times E_\eta$ for some $g$ we have that 
\begin{align}
\label{eqn:gamma_curlyB_isomorphism}
\gamma\Big(L^2\big(0,t,\mu_{t,\alpha}\big),\curlyB_\eta\Big) &= \gamma\Big(L^2\big(0,t,\mu_{t,\alpha}\big),L^p\big(-\infty,0,g(r) \ud r;E_\eta\big)\times E_\eta\Big) \nonumber\\
&\simeq L^p_g\Big((-\infty,0];\gamma\big(L^2(0,t,\mu_{t,\alpha}),E_\eta\big)\Big)\nonumber\\
&\qquad\qquad\qquad\qquad\qquad\times\gamma\big(L^2(0,t,\mu_{t,\alpha}),E_\eta\big).
\end{align}
We now apply this isomorphism to the map $s\mapsto \widetilde{\phi}_s$
\begin{align}
\label{eqn:tilde_gamma_lipshitz_2}
\big\|s\mapsto&\widetilde{\phi}_s\big\|^p_{\gamma(L^2(0,t,\mu_{t,\alpha}),\curlyB_\eta)}\nonumber\\  
  &\leq C \big\|s\mapsto\widetilde{\phi}_s\big\|^p_{L^p_g((-\infty,0];\gamma(L^2(0,t,\mu_{t,\alpha}),E_\eta))\times\gamma(L^2(0,t,\mu_{t,\alpha}),E_\eta)}\nonumber\\
  &= C\Big( \big\|s\mapsto\widetilde{\phi}_s\big\|^p_{L^p_g((-\infty,0];\gamma(L^2(0,t,\mu_{t,\alpha}),E_\eta))} + \big\|s\mapsto\widetilde{\phi}_s(0)\big\|^p_{\gamma(L^2(0,t,\mu_{t,\alpha}),E_\eta)}\Big)\nonumber\\
&= C\big( I_1 + I_2\big).\nonumber
\end{align}
To deal with the $L^p_g$ term $I_1$ we partition the domain $(0,t)$ of $\widetilde{\phi}_s$ so as to approximate the initial history part $\Phi$ and the part in $\V$ separately.
\begin{align}
 I_1 &= C \int_{-\infty}^0 \!g(r)\big\|s\mapsto(t-s)^{-\alpha}\widetilde{\phi}_s(r)\big\|^p_{\gL} \ud r \nonumber\\
  &= C \int_{-\infty}^0 \!g(r)\big\|s\mapsto(t-s)^{-\alpha}\widetilde{\phi}(s+r)\big\|^p_{\gL} \ud r \nonumber\\
  &= C \int_{-\infty}^0 \!g(r)\big\|u\mapsto(t+r-u)^{-\alpha}\widetilde{\phi}(u)\big\|^p_{\gamma(L^2(r,t+r),E_\eta)} \ud r \nonumber\\
  &= C \int_{-\infty}^0 g(r)\Big\|u\mapsto(t+r-u)^{-\alpha}\Big[\mathbbm{1}_{[r,0\wedge(t+r)]}(u)\Phi(u) \nonumber\\
  & \qquad\qquad + \mathbbm{1}_{(0\wedge(t+r),t+r]}(u)\phi(u)\Big]\Big\|^p_{\gamma(L^2(r,t+r),E_\eta)} \ud r \nonumber\\ 
  &\leq C \int_{-\infty}^0 \!g(r)\Bigg[\big\|u\mapsto(t+r-u)^{-\alpha}\Phi(u)\big\|^p_{\gamma(L^2(r,0\wedge(t+r)),E_\eta)}  \nonumber\\
    & \qquad\qquad + \big\|u\mapsto(t+r-u)^{-\alpha}\phi(u)\big\|^p_{\gamma(L^2(0\wedge(t+r),t+r),E_\eta)}\Bigg] \ud r.\nonumber
\end{align}
Now the second term in this integral is $0$ for $r<-t$, so
\begin{align}
  &\leq C \Bigg[\int_{-\infty}^0 \!g(r)\big\|u\mapsto(t+r-u)^{-\alpha}\widehat{\Phi}(u)\big\|^p_{\gamma(L^2(r,t+r),E_\eta)} \ud r  \nonumber\\ 
  & \qquad\qquad + \int_{-t}^0 \!g(r)\big\|u\mapsto(t+r-u)^{-\alpha}\phi(u)\big\|^p_{\gamma(L^2(0,t+r),E_\eta)} \ud r \Bigg]\nonumber\\  
  &\leq C \Bigg[\int_{-\infty}^0\!g(r)\big\|s\mapsto (t-s)^{-\alpha}\widehat{\Phi}_s(r)\big\|^p_{\gamma(L^2(0,t),E_\eta)}\ud r\nonumber\\
  & \qquad\qquad + \sup_{\substack{t\in[0,T]\\r\in[-t,0]}}\big\|u\mapsto(t+r-u)^{-\alpha}\phi(u)\big\|^p_{\gamma(L^2(0,t+r),E_\eta)} \int_{-t}^0 \!g(r)\ud r \Bigg]\nonumber\\  
  &\lesssim C\Bigg[ \sup_{t\in[0,T]}\big\|s\mapsto (t-s)^{-\alpha}\widehat{\Phi}_s\|^p_{\gamma(L^2(0,t),\curlyB_\eta)}\nonumber\\
  & \qquad\qquad + \sup_{t\in[0,T]}\big\|s\mapsto(t-s)^{-\alpha}\phi(s)\big\|^p_{\gamma(L^2(0,t),E_\eta)} \int_{-t}^0 \!g(r)\ud r \Bigg], 
\end{align}
and since $\phi\in\V$ and $\Phi$ satisfies \eqref{eqn:Phi_gamma_lipschitz}, this is integrable with finite expectation. For $I_2$
\begin{align}
 I_2 &= \big\|s\mapsto\widetilde{\phi}_s(0)\big\|^p_{\gamma(L^2(0,t,\mu_{t,\alpha}),E_\eta)} = \big\|s\mapsto\phi(s)\big\|^p_{\gamma(L^2(0,t,\mu_{t,\alpha}),E_\eta)}\nonumber\\
&\leq  \sup_{t\in[0,T]}\big\|s\mapsto(t-s)^{-\alpha}\phi(s)\big\|^p_{\gamma(L^2(0,t),E_\eta)}
\end{align}
which has finite expectation for the same reason. Hence  $s\mapsto\widetilde{\phi}_s$ and $s\mapsto\widetilde{\psi}_s$ are contained in $L^2_\gamma\big(0,t,\mu_{t,\alpha};\curlyB_\eta\big)$ almost surely.

By (D5), $\widetilde{\phi}\mapsto B(\cdot,\widetilde{\phi}_\cdot)$ is of linear growth from the space $L^2_\gamma\big(0,T,\mu_{t,\alpha};\curlyB_\eta\big)$ into $\gamma(L^2(0,T,\mu_{t,\alpha};H),E_{\theta_B}\big)$, and so $\Psi(s) := B(s,\widetilde{\phi}_s)$ satisfies \eqref{eqn:Psi_assumption1} and we can apply Lemma \ref{lemma:stochastic_part_a}. Thus as $B(s,\widetilde{\phi}_s)$ and $B(s,\widetilde{\psi}_s)$ are $H$-strongly measurable and adapted and $S\diamond B(\cdot,\widetilde{\phi}_\cdot)$ and $S\diamond B(\cdot,\widetilde{\psi}_\cdot)$ are in $\V$ by \eqref{eqn:stochastic_estimate1}.

Now using the $L^2_\gamma$-Lipschitz property of $B$ \eqref{eqn:B-lipshitz} 
\begin{align}
 \label{eqn:B_contraction1}
  \big\|&S\diamond\big(B(\cdot,\widetilde{\phi}_\cdot) - B(\cdot,\widetilde{\psi}_\cdot)\big)\big\|_{\subV} \nonumber\\
   &\quad \stackrel{\eqref{eqn:stochastic_estimate1}}{\lesssim} T^{\varepsilon'}\sup_{t\in[0,T]}\Big( \mathbb{E}\big\|s\mapsto(t-s)^{-\alpha}\big[B(s,\widetilde{\phi}_s) - B(s,\widetilde{\psi}_s)\big]\big\|^p_{\gamma(L^2(0,t;H);E_{-\theta_B})}\Big)^{\frac{1}{p}}\nonumber\\
   &\quad \;\,=\;\, T^{\varepsilon'}\sup_{t\in[0,T]}\Big( \mathbb{E}\big\|s\mapsto B(s,\widetilde{\phi}_s) - B(s,\widetilde{\psi}_s)\big\|^p_{\gamma(L^2(0,t,\mu_{t,\alpha};H);E_{-\theta_B})}\Big)^{\frac{1}{p}}\nonumber\\
   &\quad \stackrel{\eqref{eqn:B-lipshitz}}{\lesssim} L^\gamma_B T^{\varepsilon'}\sup_{t\in[0,T]}\Big( \mathbb{E}\big\|s\mapsto (\widetilde{\phi} - \widetilde{\psi})_s\big\|^p_{L^2_\gamma(0,t,\mu_{t,\alpha};\curlyB_\eta)}\Big)^{\frac{1}{p}}\nonumber\\
   &\quad \;\lesssim\; L^\gamma_B T^{\varepsilon'}\Bigg[\sup_{t\in[0,T]}\Big( \mathbb{E}\big\|s\mapsto (t-s)^{-\alpha} (\widetilde{\phi} - \widetilde{\psi})_s\big\|^p_{\gamma(L^2(0,t),\curlyB_\eta)}\Big)^{\frac{1}{p}} \\
&\quad\qquad\qquad\qquad\qquad\qquad + \sup_{t\in[0,T]}\Big( \mathbb{E}\big\|s\mapsto (t-s)^{-\alpha} (\widetilde{\phi} - \widetilde{\psi})_s\big\|^p_{L^2(0,t;\curlyB_\eta)}\Big)^{\frac{1}{p}}\Bigg].\nonumber
\end{align}
Using property \eqref{eqn:KM_ineq} of Lemma \ref{lemma:def:phasespace} and the fact that $\widetilde{\phi}_0 = \widetilde{\psi}_0 = \Phi$ almost surely, we have that for almost all $\omega\in\Omega$
\begin{align}
\label{eqn:B_contraction2}
 \big\|s\mapsto (t-s)^{-\alpha} (\widetilde{\phi} - \widetilde{\psi})_s\big\|^p_{L^2(0,t;\curlyB_\eta)}  &\leq T^{(\frac{p}{2} - \alpha p)} \big\|t\mapsto(\widetilde{\phi} - \widetilde{\psi})_t\big\|^p_{C([0,T];\curlyB_\eta)}\nonumber\\
  &\leq K_T^p T^{(\frac{p}{2} - \alpha p)} \big\|\phi-\psi\big\|^p_{C([0,T];E_\eta)}
\end{align}
and by a further use of Corollary \ref{cor:product_gamma_fub}, the isomorphism 
\begin{align}
 \gamma\Big(L^2\big(0,t,\mu_{t,\alpha}\big),\curlyB_\eta\Big) \simeq L^p_g\Big((-\infty,0];\gamma\big(L^2(&0,t,\mu_{t,\alpha}),E_\eta\big)\Big)\nonumber\\
  &\times\gamma\big(L^2(0,t,\mu_{t,\alpha}),E_\eta\big)
\end{align}
gives us
\begin{align}
\label{eqn:B_contraction3}
 \sup_{t\in[0,T]}&\big\|s\mapsto (t-s)^{-\alpha} (\widetilde{\phi} - \widetilde{\psi})_s\big\|^p_{\gamma(L^2(0,t),\curlyB_\eta)} \nonumber\\
  & \lesssim \sup_{t\in[0,T]} \int^0_{-\infty} g(r)\big\|s\mapsto (t-s)^{-\alpha} (\widetilde{\phi} - \widetilde{\psi})(s+r)\big\|^p_{\gamma(L^2(0,t),E_\eta)}\ud r\nonumber\\
  &\qquad\qquad\qquad\qquad + \sup_{t\in[0,T]}\big\|s\mapsto (t-s)^{-\alpha} (\phi - \psi)(s)\big\|^p_{\gamma(L^2(0,t),E_\eta)}\nonumber\\
  & \leq  \big(1+\|g\|_{L^1(-T,0)}\big)\sup_{\substack{t\in[0,T] \\
 r\in[-t,0]}}\big\|s\mapsto (t-s)^{-\alpha} (\widetilde{\phi} - \widetilde{\psi})(s+r)\big\|^p_{\gamma(L^2(0,t),E_\eta)}\nonumber\\
  & =  \big(1+\|g\|_{L^1(-T,0)}\big)\sup_{\substack{t\in[0,T] \\
 r\in[-t,0]}} \big\|u\mapsto (t+r-u)^{-\alpha} (\phi - \psi)(u)\big\|^p_{\gamma(L^2(0,t+r),E_\eta)}\nonumber\\
  & =  \big(1+\|g\|_{L^1(-T,0)}\big)\sup_{t\in[0,T]} \big\|s\mapsto (t-s)^{-\alpha} (\phi - \psi)(s)\big\|^p_{\gamma(L^2(0,t),E_\eta)}.
\end{align}
Taking expectations and combining \eqref{eqn:B_contraction1} with \eqref{eqn:B_contraction2} and \eqref{eqn:B_contraction3}, there exists $C\geq0$ such that
\begin{equation}
 \label{eqn:B_contraction4}
   \big\|S\diamond\big(B(\cdot,\widetilde{\phi}_\cdot) - B(\cdot,\widetilde{\psi}_\cdot)\big)\big\|_{\subV} \leq C T^{\varepsilon'} \|\phi-\psi\|_{\subV}
\end{equation}
as required.

\mbox{}\\
\emph{Step 4} (Collecting the estimates).

From the estimates \eqref{eqn:initial_estimate}, \eqref{eqn:F_contraction} and \eqref{eqn:B_contraction4} we see that $L_T$ is well defined on $\V$ and that there exist constants $C\geq0$ and $\beta>0$ such that for all $\phi,\psi\in \V$ we have 
\begin{equation}
 \label{eqn:L_contraction}
  \big\|L_T(\phi) - L_T(\psi)\|_{\subV} \leq CT^\beta \|\phi-\psi\|_{\subV}.
\end{equation}
Now recall the definition of $\widehat{\Phi}$ \eqref{eqn:defHat} as the extension of $\Phi$ by $0$ to $(-\infty,T_0]$
\begin{align}
 \big\|S(\cdot)\Phi(0)\big\|_{\subV} &\quad\stackrel{\eqref{eqn:initial_estimate}}{\!\leq}\quad \!\!C\Big( \mathbb{E}\big\|\Phi(0)\big\|_{E_\eta}^p\Big)^{\frac{1}{p}}\nonumber\\
  &\quad\stackrel{\eqref{eqn:KM_ineq}}{\leq}\quad CH\Big( \mathbb{E}\big\|\Phi\big\|_{\curlyB_\eta}^p\Big)^{\frac{1}{p}}\nonumber\\
 \big\|S*F(\cdot,\widehat{\Phi}_\cdot)\big\|_{\subV} &\quad\stackrel{\eqref{eqn:deterministic_estimate_3}}{\!\!\leq}\quad \!\!CT^\beta\Big( \mathbb{E}\big\|F(\cdot,\widehat{\Phi}_\cdot)\big\|_{C([0,T];E_\eta)}^p\Big)^{\frac{1}{p}}\nonumber\\
  &\quad\stackrel{\eqref{eqn:F-linear-growth}}{\leq}\quad C_F C T^\beta \Big(1 + \mathbb{E}\big\|\Phi\big\|_{\curlyB_\eta)}^p\Big)^{\frac{1}{p}}.\nonumber
\end{align}
By \eqref{eqn:Phi_gamma_lipschitz}, $B(\cdot,\widehat{\Phi}_\cdot)$ satisfies \eqref{eqn:Psi_assumption1}, and hence by \eqref{eqn:stochastic_estimate1}
\begin{align}
 \big\|S\diamond B(\cdot,\widehat{\Phi}_\cdot)&\big\|_{\subV} \nonumber\\
&\leq CT^\beta\Big(\sup_{t\in[0,T]}\mathbb{E}\big\|s\mapsto(t-s)^{-\alpha}B(s,\widehat{\Phi}_s)\big\|^p_{\gamma(L^2(0,t;H),\curlyB_\eta)}\Big)^{\frac{1}{p}}\nonumber\\
&\stackrel{\eqref{eqn:B-linear-growth}}{\leq} CT^\beta C^\gamma_B\Big(1+ \sup_{t\in[0,T]}\mathbb{E}\big\|s\mapsto(t-s)^{-\alpha}\widehat{\Phi}_s\big\|^p_{L^2_\gamma(0,t;\curlyB_\eta)}\Big)^{\frac{1}{p}}\nonumber
\end{align}
Combining the above gives
\begin{align}
  \big\|L_T(0)\big\|_{\subV} &\leq C\Big(1 + \big(\mathbb{E}\|\Phi\|^p_{\curlyB_\eta}\big)^{\frac{1}{p}}\nonumber\\
  & + \big(\!\sup_{t\in[0,T]}\mathbb{E}\big\|s\mapsto(t-s)^{-\alpha}\widehat{\Phi}_s\big\|^p_{L^2_\gamma(0,t;\curlyB_\eta)}\big)^{\frac{1}{p}}\Big)\nonumber
\end{align}
which together with \eqref{eqn:L_contraction} yields the result.
\end{proof}

\begin{thm}[Existence and Uniqueness]
\label{thm:6.2}
 Suppose (D1) -- (D6) are satisfied and choose $\alpha \in (0,1/2)$ such that 
\[
 \eta + \theta_B < \alpha - \frac{1}{p}.
\]
There exists a mild solution $U$ in $V^p_{\alpha,\infty}\big([0,T_0]\times\Omega;E_\eta\big)$ of \eqref{eqn:DSDE}. As a mild solution in $V^p_{\alpha,p}\big([0,T]\times\Omega;E_\eta\big)$, this solution $U$ is unique. Moreover there exists a constant $C\geq0$ independent of $\Phi$ such that 
\begin{align}
 \label{eqn:U_linear_growth1}
\|U\|_{V^p_{\alpha,\infty}([0,T_0]\times\Omega;E_\eta)} \leq& C\Big(1 + \big(\mathbb{E}\|\Phi\|^p_{\curlyB_\eta}\big)^{\frac{1}{p}} \nonumber\\
&+ \big(\!\!\sup_{t\in[0,T]}\mathbb{E}\big\|s\mapsto(t-s)^{-\alpha}\widehat{\Phi}_s\big\|^p_{L^2_\gamma(0,t;\curlyB_\eta)}\big)^{\frac{1}{p}}\Big).
\end{align}
\end{thm}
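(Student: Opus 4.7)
The plan is to invoke Proposition \ref{prop:6.1} and apply the Banach fixed point theorem on a short interval, then iterate to cover all of $[0,T_0]$. First, since Proposition \ref{prop:6.1} provides a constant $C_T$ with $C_T\to 0$ as $T\downarrow 0$, I would choose $T_1\in(0,T_0]$ small enough that $C_{T_1}\leq 1/2$. Then $L_{T_1}$ is a strict contraction on the Banach space $V^p_{\alpha,\infty}([0,T_1]\times\Omega;E_\eta)$, as well as on $V^p_{\alpha,p}([0,T_1]\times\Omega;E_\eta)$, yielding a unique fixed point in each space. Since $V^p_{\alpha,\infty}\hookrightarrow V^p_{\alpha,p}$ continuously on a bounded interval (the $\sup_t$ dominates the $L^p(\mathrm{d}t)$-norm up to a factor $T_1^{1/p}$), these two fixed points coincide and form a mild solution $U^{(1)}$ on $[0,T_1]$. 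The linear growth bound on $[0,T_1]$ will follow from \eqref{eqn:L_linear_growth} applied to $U^{(1)}=L_{T_1}(U^{(1)})$ together with the inequality $\|U^{(1)}\|_V\leq \|L_{T_1}(0)\|_V+C_{T_1}\|U^{(1)}\|_V$ rearranged using $C_{T_1}\leq 1/2$.

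To extend the solution to $[0,T_0]$, I would define a new initial history $\Phi^{(1)}:\Omega\to\curlyB_\eta$ by concatenating $\Phi$ and $U^{(1)}$ at time $T_1$: $\Phi^{(1)}(s)=U^{(1)}(T_1+s)$ for $s\in[-T_1,0]$ and $\Phi^{(1)}(s)=\Phi(T_1+s)$ for $s<-T_1$. One then verifies that $\Phi^{(1)}$ is strongly $\mathscr{F}_{T_1}$-measurable with $\Phi^{(1)}(0)\in L^p(\Omega;E_\eta)$ and again satisfies assumption (D6). The key point is that the constants $C_T$ and $C$ in Proposition \ref{prop:6.1} depend only on $T$ and on the data (D1)--(D5), not on the initial history, so the same $T_1$ works for the translated problem driven by the shifted Brownian motion $W_H(T_1+\cdot)-W_H(T_1)$ on $[0,T_0-T_1]$. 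Iterating at most $\lceil T_0/T_1\rceil$ times and gluing the pieces yields a mild solution $U\in V^p_{\alpha,\infty}([0,T_0]\times\Omega;E_\eta)$; propagating the linear growth estimate through the finite chain of subintervals produces \eqref{eqn:U_linear_growth1} (with the new constant absorbing the $\lceil T_0/T_1\rceil$ applications).

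Uniqueness in $V^p_{\alpha,p}([0,T_0]\times\Omega;E_\eta)$ follows by the same scheme: any mild solution $U'$ in that space automatically satisfies $U'=L_{T_0}(U')$, so its restriction to $[0,T_1]$ is a fixed point of $L_{T_1}$ in $V^p_{\alpha,p}([0,T_1]\times\Omega;E_\eta)$ and must coincide with $U^{(1)}$ by uniqueness of the contraction's fixed point. Iterating along the successive subintervals gives $U'=U$ on $[0,T_0]$.

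The hard part will be the concatenation step, specifically verifying that the spliced history $\Phi^{(1)}$ inherits condition (D6) and controlling the norm on the right-hand side of \eqref{eqn:U_linear_growth1} at the restart. This will require re-running the domain-splitting and $\gamma$-Fubini argument from Step 3 of the proof of Proposition \ref{prop:6.1} (the estimates on $I_1$ and $I_2$), but now with the past of $\Phi^{(1)}$ consisting of a mixture of the original $\Phi$ and the already-constructed $U^{(1)}$; the $L^2_\gamma$-norm of $s\mapsto(t-s)^{-\alpha}\widehat{\Phi^{(1)}}_s$ must be bounded by a combination of the corresponding quantity for $\widehat{\Phi}$ and $\|U^{(1)}\|_V$, using Corollary \ref{cor:product_gamma_fub} and assumption (2) on the weight $g$ from Example \ref{ex:weightedhistoryspace}. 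Once this is in hand, the finite-step iteration and the propagation of the bound through each step are routine.
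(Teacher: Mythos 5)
Your proposal follows essentially the same route as the paper's proof: a contraction argument via Proposition \ref{prop:6.1} on an interval short enough that $C_T<1/2$, the bound \eqref{eqn:U_linear_growth1} obtained from \eqref{eqn:L_linear_growth} by rearranging with $C_T<1/2$, and extension to $[0,T_0]$ by induction after checking that the time-shifted history (your $\Phi^{(1)}$, the paper's $U_T$) again satisfies (D6). The one piece you leave as a sketch --- bounding $\big\|s\mapsto(t-s)^{-\alpha}\widehat{\Phi^{(1)}}_s\big\|_{L^2_\gamma(0,t;\curlyB_\eta)}$ by the corresponding quantity for $\widehat{\Phi}$ plus $\|U\|_{V^p_{\alpha,\infty}}$ --- is exactly what the paper carries out (its $I_1$, $I_2$ computation with $\widehat{U}$ extended by zero, Corollary \ref{cor:product_gamma_fub} and the weight condition on $g$), so your outline identifies the correct remaining work and the right tools.
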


\begin{proof}
 Again, we follow the method used in Theorem 6.3 of \cite{VanN-Weis_1}. By Proposition \ref{prop:6.1} we can find $T\in (0,T_0]$, independent of $\Phi$, such that $C_T < 1/2$. It follows from \eqref{eqn:L_Lipschitz} and the Banach fixed point theorem that $L_T$ has a unique fixed point $U\in \V$. This gives a continuous adapted process $U:[0,T]\times\Omega \to E_\eta$ such that almost surely for all $t\in[0,T]$
\begin{equation}
\label{eqn:U_mild_soln}
 U(t) = S(t)\Phi + S*F(\cdot,\widetilde{U}_\cdot)(t) + S\diamond B(\cdot,\widetilde{U}_\cdot)(t).\nonumber
\end{equation}
Noting that $U = \lim_{n\to\infty}L_T^n(0)$ in $\V$, \eqref{eqn:L_linear_growth} implies the inequality
\begin{align}
 \|U\|_{\subV} \leq& C\Big(1 + \big(\mathbb{E}\|\Phi\|^p_{\curlyB_\eta}\big)^{\frac{1}{p}}\nonumber\\
&+ \big(\!\!\sup_{t\in[0,T]}\mathbb{E}\big\|s\mapsto(t-s)^{-\alpha}\widehat{\Phi}_s\big\|^p_{L^2_\gamma(0,t;\curlyB_\eta)}\big)^{\frac{1}{p}}\Big) \nonumber\\
&+ C_T\|U\|_{\subV},\nonumber
\end{align}
and then $C_T<1/2$ implies 
\begin{align}
\label{eqn:U_linear_growth2}
 \|U\|_{V^p_{\alpha,\infty}([0,T]\times\Omega;E_\eta)} \leq& C\Big(1 + \big(\mathbb{E}\|\Phi\|^p_{\curlyB_\eta}\big)^{\frac{1}{p}}\\
&+ \big(\!\!\sup_{t\in[0,T]}\mathbb{E}\big\|s\mapsto(t-s)^{-\alpha}\widehat{\Phi}_s\big\|^p_{L^2_\gamma(0,t;\curlyB_\eta)}\big)^{\frac{1}{p}}\Big).\nonumber
\end{align}

To find a solution on all of $[0,T_0]$ we show that $U_T$ satisfies (D6) and hence a solution exists on $[T,2T]$. Then by induction we can construct mild solutions on each of the intervals \mbox{$[T,2T], \ldots, [nT,T_0]$} for some $n$. The induced solution $U$ on $[0,T_0]$ is the desired mild solution of \eqref{eqn:DSDE} and is unique. Moreover, by \eqref{eqn:U_linear_growth2} and induction we deduce \eqref{eqn:U_linear_growth1}.

Now $U_T:\Omega\to\curlyB_\eta$ is strongly $\mathscr{F}_T$-measurable, $U(T) \in L^p(\Omega,E_\eta)$ and setting
\begin{equation}
\label{eqn:defHatU}
 \widehat{U}(t) := \left\{ \begin{array}{ll}
			  \Phi(t) & t\in(-\infty,0]\\
			  U(t) & t\in (0,T]\\
			  0 & t\in (T,T_0]
			\end{array}\right.
\end{equation}
and $T_1 = T_0 - T$ we have
\begin{align}
 \sup_{t\in[0,T_1]}\mathbb{E}\big\|s&\mapsto(t-s)^{-\alpha}\widehat{U}_{T+s}\big\|_{\gamma(L^2(0,t;H),\curlyB_\eta)} \nonumber\\
&\!\!\!\stackrel{\eqref{eqn:gamma_curlyB_isomorphism}}{\lesssim}  \!\!\!\sup_{t\in[0,T_1]}\mathbb{E} \int_{-\infty}^0\!\!\!\!\!g(r)\big\|s\mapsto(t-s)^{-\alpha}\widehat{U}(T+s+r)\big\|_{\gamma(L^2(0,t;H),E_\eta)} \ud r\nonumber\\
&\qquad\qquad\qquad + \sup_{t\in[0,T_1]}\mathbb{E} \big\|s\mapsto(t-s)^{-\alpha}\widehat{U}(T+s)\big\|_{\gamma(L^2(0,t;H),E_\eta)}\nonumber\\
& = \sup_{t\in[0,T_1]}\mathbb{E}\big(I_1 + I_2\big)\nonumber
\end{align}
Now $I_2 = 0$ by the definition of $\widehat{U}$ and by partitioning the range $(0,t)$ as above, 
\begin{align}
I_1 &= \int_{-\infty}^0\!\!\!\!\!g(r)\big\|w\mapsto(t\!+\!T\!+\!r\!-\!w)^{-\alpha}\widehat{U}(w)\big\|_{\gamma(L^2(T+r,t+T+r;H),E_\eta)} \ud r\nonumber\\
&\leq \int_{-\infty}^0\!\!\!\!\!g(r)\big\|w\mapsto(t\!+\!T\!+\!r\!-\!w)^{-\alpha}\Big[\mathbbm{1}_{(-\infty,0]}\Phi(w) \nonumber\\
& \qquad\qquad\qquad\qquad\qquad\qquad\qquad + \mathbbm{1}_{(0,T]}(w)U(w)\Big]\big\|_{\gamma(L^2(T+r,t+T+r;H),E_\eta)}\ud r\nonumber\\
&\leq  \int_{-\infty}^0\!\!\!\!\!g(r)\Big[\big\|w\mapsto(t\!+\!T\!+\!r\!-\!w)^{-\alpha}\Phi(w)\big\|_{\gamma(L^2(T+r,(t+T+r)\wedge0;H),E_\eta)}\nonumber\\
& \quad + \big\|w\mapsto(t\!+\!T\!+\!r\!-\!w)^{-\alpha}U(w)\big\|_{\gamma(L^2((t+T+r)\wedge0,(t+T+r)\wedge T;H),E_\eta)}\Big]\ud r\nonumber\\
&\leq \Big[ \int_{-\infty}^0\!\!\!\!\!g(r)\big\|s\mapsto(t-s)^{-\alpha}\widehat{\Phi}_{T+s}(r)\big\|_{\gamma(L^2(0,t;H),E_\eta)}\ud r\nonumber\\
& \quad + \int_{-(t+T)}^0\!\!\!\!\!\!\!\!\!\!\!g(r)\big\|w\mapsto(t\!+\!T\!+\!r\!-\!w)^{-\alpha}U(w)\big\|_{\gamma(L^2((t+T+r)\wedge0,(t+T+r)\wedge T;H),E_\eta)}\ud r \Big]\nonumber\\
&\!\!\!\stackrel{\eqref{eqn:gamma_curlyB_isomorphism}}{\lesssim} \big\|g(r)\big\|_{L^1(-T_0,0)}\Big[\big\|s\mapsto(t-s)^{-\alpha}\widehat{\Phi}_{T+s}\big\|_{\gamma(L^2(0,t;H),\curlyB_\eta)}\nonumber\\
&\quad+ \!\!\!\!\!\!\sup_{r\in[-(t+T),0]}\big\|w\mapsto(t\!+\!T\!+\!r\!-\!w)^{-\alpha}U(w)\big\|_{\gamma(L^2(0,(t+T+r)\wedge T;H),E_\eta)}\Big]\nonumber\\
&\lesssim \big\|s\mapsto(t-s)^{-\alpha}\widehat{\Phi}_{T+s}\big\|_{\gamma(L^2(0,t;H),\curlyB_\eta)} + \big\|s\mapsto(t-s)^{-\alpha}U(s)\big\|_{\gamma(L^2(0,t;h),E_\eta)}\nonumber
\end{align}
so $\sup_{t\in[0,T_1]}\mathbb{E}(I_1)$ is finite as $U\in \V$ and $U_T$ satisfies (D6). The result follows.
\end{proof}

Space and time regularity of the solution follows very quickly from the above, as H\"older regularity of the convolutions was already proved in \cite{VanN-Weis_1} Lemma 3.6 and Proposition 4.2, which up until now have not been used to their full extent. The proof follows exactly the same lines as \cite{VanN-Weis_1} and so is omitted.

\begin{thm}
  Let $E$ be a UMD space with type $\tau\in[1,2]$ and suppose that (D1) - (D6) hold. Choose $\alpha \in (0,1/2)$ such that 
\[
 \eta + \theta_B < \alpha - \frac{1}{p}
\]
Let $\lambda\geq 0$ and $\delta\geq\eta$ satisfy $\lambda+\delta < \min\{1/2-1/p - \theta_B, 1-\theta_F\},$ then there exists $C\geq0$ such that
\[
  \Big( \mathbb{E}\big\|U-S\Phi(0)\|^p_{C^\lambda([0,T_0];E_\delta)}\Big)^{\frac{1}{p}} \leq C\Big(1+\Big(\mathbb{E}\|\Phi\|^p_{\curlyB_\eta}\Big)^{\frac{1}{p}}\Big).
\]

\end{thm}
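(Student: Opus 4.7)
The plan is to apply the Hölder regularity estimates for the two convolutions that are already available in \cite{VanN-Weis_1} (Lemma 3.6 for the deterministic convolution and Proposition 4.2 for the stochastic convolution) to the fixed point $U$ provided by Theorem \ref{thm:6.2}, and then use the linear growth conditions (D4) and (D5) together with the a priori bound \eqref{eqn:U_linear_growth1} on $\|U\|_{V^p_{\alpha,\infty}([0,T_0]\times\Omega;E_\eta)}$ to control the right hand sides.

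First I would write $U - S(\cdot)\Phi(0) = S*F(\cdot,\widetilde{U}_\cdot) + S\diamond B(\cdot,\widetilde{U}_\cdot)$ and treat the two terms separately. For the deterministic part, Lemma 3.6 of \cite{VanN-Weis_1} applied with the exponents $\delta, \lambda, \theta_F$ (which are admissible precisely because $\lambda+\delta < 1-\theta_F$) gives a pathwise estimate of the form
\[
\|S*F(\cdot,\widetilde{U}_\cdot)\|_{C^\lambda([0,T_0];E_\delta)} \leq C\|F(\cdot,\widetilde{U}_\cdot)\|_{C([0,T_0];E_{-\theta_F})},
\]
and the right hand side is controlled in $L^p(\Omega)$ by $C_F(1+(\mathbb{E}\sup_t\|\widetilde{U}_t\|^p_{\curlyB_\eta})^{1/p})$ via \eqref{eqn:F-linear-growth} and property \eqref{eqn:KM_ineq}. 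The latter reduces to $1 + (\mathbb{E}\|\Phi\|^p_{\curlyB_\eta})^{1/p} + (\mathbb{E}\|U\|^p_{C([0,T_0];E_\eta)})^{1/p}$, which by \eqref{eqn:U_linear_growth1} is bounded by the right hand side of the claimed estimate.

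For the stochastic part, Proposition 4.2 of \cite{VanN-Weis_1} applied with the exponents $\delta, \lambda, \theta_B, \alpha, p$ (admissible because $\lambda + \delta < 1/2 - 1/p - \theta_B$) yields
\[
\Big(\mathbb{E}\|S\diamond B(\cdot,\widetilde{U}_\cdot)\|^p_{C^\lambda([0,T_0];E_\delta)}\Big)^{\frac{1}{p}} \leq C\sup_{t\in[0,T_0]}\Big(\mathbb{E}\big\|s\mapsto(t-s)^{-\alpha}B(s,\widetilde{U}_s)\big\|^p_{\gamma(L^2(0,t;H),E_{-\theta_B})}\Big)^{\frac{1}{p}}.
\]
By (D5) the right hand side is dominated by $C^\gamma_B\big(1 + \sup_t(\mathbb{E}\|s\mapsto\widetilde{U}_s\|^p_{L^2_\gamma(0,t,\mu_{t,\alpha};\curlyB_\eta)})^{1/p}\big)$, which is precisely the quantity that was estimated in Step 3 of the proof of Proposition \ref{prop:6.1} in terms of $\|U\|_{V^p_{\alpha,\infty}}$, $(\mathbb{E}\|\Phi\|^p_{\curlyB_\eta})^{1/p}$ and the supremum involving $\widehat{\Phi}$. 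A further application of \eqref{eqn:U_linear_growth1} gives the desired bound.

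The main obstacle, as in the proof of Proposition \ref{prop:6.1}, is translating the history process $\widetilde{U}_s$ back into quantities involving only $U$ on $[0,T_0]$ and the initial datum $\Phi$; this is the place where the $\gamma$-Fubini isomorphism of Corollary \ref{cor:product_gamma_fub} and the splitting of the integration range into $[r,0]$ and $[0,t+r]$ are needed. Since this computation is identical to the one already carried out in Step 3 of Proposition \ref{prop:6.1}, no new ideas are required and, as noted in the excerpt, the proof can be omitted once the relevant bookkeeping is in place.
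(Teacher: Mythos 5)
Your proposal follows precisely the route the paper intends: the paper omits the proof because it reduces to applying Lemma 3.6 and Proposition 4.2 of \cite{VanN-Weis_1} to the two convolution terms of $U-S(\cdot)\Phi(0)$, combined with the linear growth assumptions (D4)--(D5), the history bookkeeping of Step 3 of Proposition \ref{prop:6.1}, and the a priori bound \eqref{eqn:U_linear_growth1}, which is exactly what you do. The only remark is that your final estimate naturally retains the supremum term involving $\widehat{\Phi}$ coming from (D6) and \eqref{eqn:U_linear_growth1}, which the stated inequality suppresses; this reflects the theorem's formulation rather than a defect in your argument.
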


\section{An Example}
\label{sec:example}

Let $S$ be an open and bounded subset of $\reals^d$ and consider the following perturbed heat equation with memory, equipped with Dirichlet boundary conditions
\begin{align}
\label{eqn:example_DSDE}
  \frac{\partial u}{\partial t}u(t,s) &\quad=\quad  \Delta u(t,s) + f\big(t,s,u_t(s)\big) + \sum_{n\geq1} b_n\big(t,s,u_t(s)\big)\frac{\partial W_n}{\partial t}(t),\nonumber\\
&\qquad\qquad\qquad\qquad\qquad\qquad\qquad\qquad\qquad t\in[0,T],\;s\in S,\\
  u(t,s) &\quad=\quad 0, \quad t\in[0,T],\; s\in\partial S,\nonumber\\
  u_0(t,s) &\quad=\quad \Phi(t,s), \quad t\in(-\infty,0],\; s\in S.\nonumber\\
  u(0,s) &\quad=\quad \Phi(0,s)
\end{align}
Let $p>2$ and $E := L^p(S)$. It is well known that the Dirichlet Laplacian $\Delta_p$ generates a uniformly exponentially stable analytic $C_0$-semigroup on $E$. $(W_n)_{n\geq1}$ is a sequence of independent standard Brownian motions on $\Omega$. We say that $u:[0,T]\times\Omega\times S \to \reals$ is a solution of \eqref{eqn:example_DSDE} if the corresponding functional analytic model \eqref{eqn:DSDE} has a mild solution $U$ with $U(t,\omega)(s) = u(t,\omega,s)$.

We assume the functions $f,b_n : [0,T]\times \Omega\times S\times L^p(-\infty,0] \to \reals$ are jointly measurable and adapted, and that there exist constants $L_f, L_{b_n}\geq0$ such that for all $t\in[0,T]$, $\omega\in\Omega$, $s\in S$ and all functions $\phi,\psi:(-\infty,T]\to \reals$, that are continuous on $[0,T]$ with $\phi_0,\psi_0\in L^p(-\infty,0]$ we have
\begin{equation}
 \label{eqn:example_f_lipshitz}
  |f(t,\omega,s,\phi) - f(t,\omega,s,\psi)| \leq L_f\|\phi_0 - \psi_0\|_{L^p(-\infty,0]}
\end{equation}
and
\begin{equation}
 \label{eqn:example_b_lipshitz}
  |b_n(t,\omega,s,\phi) - b_n(t,\omega,s,\psi)| \leq L_{b_n}\big\|(\phi - \psi)_t(\cdot)\big\|_{L^p(-\infty,0]}.
\end{equation}
where $\sum_{n\geq1}b_n^2 < \infty$. Assume also that 
\begin{equation}
\label{eqn:expl_f_linear_growth}
 \sup_{t\in[0,T],\,\omega\in\Omega}\big\|f(t,\omega,\cdot,0)\big\|_{L^p(S)} < \infty
\end{equation}
 and
\begin{equation}
\label{eqn:expl_b_linear_growth}
 \sup_{\omega\in\Omega}\Bigg\|\Bigg(\int_0^T \sum_{n\geq1} \big|b_n(t,\omega,\cdot,0)\big|^2 \ud\mu(t)\Bigg)^{\frac{1}{2}}\Bigg\|_{L^p(S)} <\infty
\end{equation}
for all finite measures $\mu$ on $[0,T]$.

\begin{thm}
 Assume that the above holds, that $\Phi(0,\cdot)\in L^p(\Omega,E)$ and that the initial history map $t\mapsto\widehat{\Phi}_t$ lies in 
\[
 L^p\big((-\infty,0]\times S; L^2[0,T]\big) \cap L^2\big([0,T];L^p\big((-\infty,0]\times S\big)\big),
\]
i.e.~both the following are finite
\[
 \int_S \int_{-\infty}^0\!\!\Bigg(\int_0^T \!\!|\widehat{\Phi}(r+t,s)|^2 \ud r\Bigg)^{\frac{p}{2}}\!\!\!\! \ud t \ud s, \quad\quad \int_0^T\!\!\Bigg(\int_S\int_{-\infty}^0 \!\!|\widehat{\Phi}(r+t,s)|^p \ud t \ud s\Bigg)^{\frac{2}{p}}\!\!\!\! \ud r.
\]
Then for all $\alpha\in(0,1/2 - 1/p)$ the problem \eqref{eqn:example_DSDE} has a unique mild solution $U\in V^p_{\alpha,p}\big([0,T]\times\Omega;L^p(S)\big)$. 
\end{thm}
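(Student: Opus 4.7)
The plan is to verify that the concrete system \eqref{eqn:example_DSDE} fits into the abstract framework by checking assumptions (D1)--(D6) of Theorem~\ref{thm:6.2}, then invoke that theorem. Set $E := L^p(S)$ for $p>2$: this is a UMD space of type $\tau=2$, and $A := \Delta_p$ with Dirichlet boundary conditions generates a uniformly exponentially stable analytic $C_0$-semigroup, giving (D1). Choose $\eta = \theta_F = \theta_B = 0$, so that (D2) reduces to $1/p < 1/2$, which holds, and $E_\eta = E$. Take the history space as in (D3), and let $H := \ell^2$ with canonical basis $(e_n)$ so that $W_H(t)e_n := W_n(t)$ defines the $H$-cylindrical Brownian motion.

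I would then introduce the Nemytskii-type maps. Define $F : [0,T]\times\Omega\times\curlyB \to E$ by $F(t,\omega,\phi)(s) := f(t,\omega,s,\phi(\cdot)(s))$, where by Fubini, for $\phi\in L^p_g(-\infty,0;L^p(S))$, the section $r\mapsto\phi(r)(s)$ is in $L^p(-\infty,0]$ for a.e.~$s\in S$. The Lipschitz condition \eqref{eqn:example_f_lipshitz} combined with Fubini yields
\[
\|F(t,\omega,\phi) - F(t,\omega,\psi)\|_{E}^p = \int_S |f(t,\omega,s,\phi(\cdot)(s)) - f(t,\omega,s,\psi(\cdot)(s))|^p \ud s \leq L_f^p \|\phi-\psi\|_{\curlyB}^p,
\]
and \eqref{eqn:expl_f_linear_growth} gives the linear growth bound; measurability is inherited. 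This establishes (D4). Next, define $B : [0,T]\times\Omega\times\curlyB \to \mathcal{L}(H,E)$ by $B(t,\omega,\phi)e_n(s) := b_n(t,\omega,s,\phi(\cdot)(s))$.

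The core technical step is verifying the $L^2_\gamma$-Lipschitz property (D5) for $B$. Here I would invoke the $\gamma$-square-function identification $\gamma(H,L^p(S)) \simeq L^p(S;H)$ for $p>2$ together with the $\gamma$-Fubini isomorphism of Proposition~\ref{prop:2.6}, to obtain
\[
\gamma\big(L^2(0,T_0,\mu;H), L^p(S)\big) \simeq L^p\big(S; L^2(0,T_0,\mu;H)\big).
\]
Applied to $B(\cdot,\omega,\phi_\cdot) - B(\cdot,\omega,\psi_\cdot)$, this converts the $\gamma$-norm into
\[
\Bigg\|\Bigg(\int_0^{T_0}\sum_{n\geq 1}\big|b_n(t,\omega,\cdot,\phi_t(\cdot)) - b_n(t,\omega,\cdot,\psi_t(\cdot))\big|^2 \ud\mu(t)\Bigg)^{\!\!1/2}\,\Bigg\|_{L^p(S)}.
\]
Pointwise in $s$ and $t$, the inner sum is bounded by $\big(\sum_n L_{b_n}^2\big)\|(\phi-\psi)_t(\cdot)(s)\|^2_{L^p(-\infty,0]}$, which—after another application of the same isomorphism identifying $L^p(S;L^2(0,T_0,\mu))$ back with a $\gamma$-norm over $\curlyB$—yields the required bound $L_B^\gamma\|t\mapsto(\phi-\psi)_t\|_{L^2_\gamma(0,T_0,\mu;\curlyB)}$. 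The linear growth bound follows analogously using \eqref{eqn:expl_b_linear_growth}.

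Finally, for (D6), the two integrability hypotheses on $\widehat{\Phi}$ are precisely what is needed: the $L^p\big((-\infty,0]\times S; L^2[0,T]\big)$ membership, combined via the $\gamma$-square-function isomorphism, controls $\|s\mapsto(t-s)^{-\alpha}\widehat{\Phi}_s\|_{\gamma(L^2(0,t),\curlyB)}$, while the $L^2\big([0,T];L^p((-\infty,0]\times S)\big)$ condition controls the $L^2$-part of the $L^2_\gamma$ norm. With (D1)--(D6) verified, Theorem~\ref{thm:6.2} directly yields a unique mild solution in $V^p_{\alpha,p}\big([0,T]\times\Omega;L^p(S)\big)$ for admissible $\alpha$. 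The principal obstacle is the careful bookkeeping in the $L^2_\gamma$-Lipschitz estimate for $B$: the combined square-function and $\gamma$-Fubini identifications must be applied twice (once in the spatial, once in the history variable), and one must verify that the hypotheses imposed on $\Phi$ are precisely the $L^2_\gamma$ version of the abstract assumption in the range in which the isomorphisms hold.
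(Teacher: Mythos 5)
Your proposal is correct and follows essentially the same route as the paper: checking (D1)--(D6) with $\eta=\theta_F=\theta_B=0$, realising $F$ and $B$ as Nemytskii maps, verifying the $L^2_\gamma$-Lipschitz property of $B$ via the square-function identification $\gamma(L^2(0,T,\mu;\ell^2),L^p(S))\simeq L^p(S;L^2(0,T,\mu;\ell^2))$ together with the $\gamma$-Fubini isomorphism and the summability of the $L_{b_n}^2$, and reading off (D6) from the two stated integrability conditions before invoking Theorem \ref{thm:6.2}.
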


\begin{proof}
 We check the conditions of Theorem \ref{thm:6.2}.
\begin{itemize}
 \item[(D1)] As noted above, the Dirichlet Laplacian $\Delta_p$ generates a uniformly exponentially stable analytic $C_0$-semigroup on $E$. Let $H = \ell^2$ with the standard unit basis $(e_n)$, then setting $W_H(t)e_n = W_n(t)$, $W_H$ becomes an $H$-cylindrical Brownian motion.
 \item[(D2)] Take $\eta = \theta_F = \theta_B = 0$.
 \item[(D3)] Take $\curlyB = L^p\big((-\infty,0]\times S\big)\times L^p(S)$.
 \item[(D4)] Define $F:[0,T]\times\Omega\times\curlyB\to E$ by 
\[
 F\big(t,\omega,\phi\big)(s) = f\big(t,\omega,s,\phi(\cdot,s)\big).
\]
Let $\phi, \psi\in \curlyB$, then by \eqref{eqn:example_f_lipshitz} and \eqref{eqn:expl_f_linear_growth} we have (D4).
 \item[(D5)] Define $B:[0,T]\times\Omega\times\curlyB \to \mathcal{L}(H,E)$ by
\[
 \Big(B(t,\omega,\phi)e_n\Big)(s) = b_n\big(t,\omega,s,\phi(\cdot,s)\big).
\]
Let $\phi, \psi: (-\infty,T]\times S\to E$ be continuous on $[0,T]$ and suppose $(t\mapsto\phi_t)$ and $(t\mapsto\psi_t)$ lie in 
\[
  L^p\big((-\infty,0]\times S; L^2(0,T,\mu)\big) \cap L^2\big(0,T,\mu;L^p\big((-\infty,0]\times S\big)\big)                                                                                                                                 
\]
for all finite measures $\mu$ on $[0,T]$. By the $\gamma$-Fubini isomorphism \ref{prop:2.6} we have $(t\mapsto\phi_t)$, $(t\mapsto\psi_t)\in L^2_\gamma(0,T,\mu;\curlyB)$. Now by \eqref{eqn:example_b_lipshitz}, we have 
\begin{align}
 \big\|b_n(\cdot,\omega,s,\phi) - b_n(\cdot,&\omega,s,\psi)\big\|_{L^2(0,T,\mu)}\nonumber\\
  &\leq\quad L_{b_n}\big\|t\mapsto (\phi - \psi)_t(\cdot,s)\big\|_{L^2(0,T,\mu;L^p(-\infty,0]\times\reals)}\nonumber
\end{align}
and so
\begin{align}
 \big\|B(\cdot,\phi) - &B(\cdot,\psi)\big\|^p_{\gamma(L^2(0,T,\mu;H),E)} \nonumber\\
& \simeq_P  \big\|B(\cdot,\phi) - B(\cdot,\psi)\big\|_{L^p(S;L^2(0,T,\mu;H))} \nonumber\\
& =  \int_S \big\| \Big( b_n(\cdot,s,\phi_\cdot(*,s)) - b_n(\cdot,s,\psi_\cdot(*,s))\Big)_{n\geq1}\big\|^p_{L^2(0,T,\mu;\ell^2)}\ud s\nonumber\\
& =  \int_S \Bigg( \sum_{n\geq1} \big\| b_n(\cdot,s,\phi_\cdot(*,s)) - b_n(\cdot,s,\psi_\cdot(*,s)) \big\|^2_{L^2(0,T,\mu)}\Bigg)^{\frac{p}{2}}\ud s\nonumber\\
& \leq \int_S \Bigg( \sum_{n\geq1} L_{b_n}^2 \big\|t\mapsto (\phi-\psi)_t(*,s)\big\|^2_{L^2_\gamma(0,T,\mu;L^p(-\infty,0])}\Bigg)^{\frac{p}{2}}\ud s\nonumber\\
& \leq L^p \int_S \big\|t\mapsto(\phi-\psi)_t(*,s)\big\|^p_{L^2_\gamma(0,T,\mu;L^p(-\infty,0])} \ud s\nonumber\\
& \leq L^p \big\|t\mapsto \phi_t - \psi_t\big\|^p_{L^2_\gamma(0,T,\mu;\curlyB)}\nonumber
\end{align}
where $L:= \big(\sum_{n\geq1} L_{b_n}^2\big)^{\frac{1}{2}}$. This gives us the Lipschitz part of (D5), and the linear growth part follows from \eqref{eqn:expl_b_linear_growth}.
\item[(D6)] By Fubini's Theorem $\Phi(0,\cdot): S\times\Omega\to\reals$ is in $L^p(\Omega,E)$ by assumption and 
\begin{align}
 \big\|r\mapsto (t-r)^{-\alpha}&\widehat{\Phi}_r(\cdot,*)\big\|_{L^2_\gamma(0,t;\curlyB)}\nonumber\\
& \leq \big\|r\mapsto (t-r)^{-\alpha}\widehat{\Phi}_r(\cdot,*)\big\|_{L^2(0,t;\curlyB)} \nonumber\\
& \qquad\qquad + \big\|r\mapsto (t-r)^{-\alpha}\widehat{\Phi}_r(\cdot,*)\big\|_{L^p((-\infty,0]\times S;L^2(0,t))}\nonumber\\
& < \infty.\nonumber
\end{align}
\end{itemize}
The result follows from Theorem \ref{thm:6.2}.
\end{proof}

\section*{Acknowledgements}
We thank Jan van Neerven, Mark Veraar and Sonja Cox for very helpful discussions.

\end{document}